\newtheorem{theorem}{Theorem}[section]
\newtheorem{proposition}[theorem]{Proposition}
\newtheorem{lemma}[theorem]{Lemma}
\newtheorem{remarks}[theorem]{Remarks}
\def\tr{\mathop{\mathbf{tr}}}
\def\det{\mathop{\mathbf{det}}}
\def\beqn{\begin{equation}}
\def\eeqn{\end{equation}}
\def\epf{\qed \enddemo}
\def\fp{\mathfrak p}
\def\a{\alpha}
\def\Claminv2{|C(\Lambda)|^{-2}}
\def\ga{\gamma}
\def\lam{\lambda}
\def\Ome{\Omega}
\def\Aa2D{A^{\a,2}(D)}
\def\bAa2D{\overline{A^{\a,2}(D)}}
\def\Ab2D{A^{\beta,2}(D)}
\def\bAb2D{\overline{A^{\beta,2}(D)}}
\def\Norm#1_#2{\Vert#1\Vert_{#2}}
\def\2pd#1#2{\frac{\partial^2 #1}{\partial #2^2}}
\def\p11d#1#2#3{\frac{\partial^2 #1}{  \partial #2\partial #3  }}
\def\ga{\gamma}
\def\Claminv2{|C(\Lambda)|^{-2}}
\def\lam{\lambda}
\def\ad{\operatorname{ad}}
\def\Ad{\operatorname{Ad}}
\def\exp{\operatorname{exp}}
\def\Aa2D{A^{\a,2}(D)}
\def\bAa2D{\overline{A^{\a,2}(D)}}
\def\Ab2D{A^{\beta,2}(D)}
\def\bAb2D{\overline{A^{\beta,2}(D)}}
\def\ub1#1{\underline{\mathbf 1^{#1}}}
\def\h-g-o-p{hypergeometric orthogonal polynomial }
\def\h-g-o-ps{hypergeometric orthogonal polynomials }
\def\nat0{\mathbb Z_{\ge 0}} 
\def\bpf{\begin{proof}}
\def\epf{\end{proof}}
\def\beq{\begin{equation}}
\def\eeq{\end{equation}}
\def\draft{\centerline{(Draft {\the \day}/{\the\month} \the \year.)}}
\def\pia2ta2{\pi({\frac a2})\otimes \overline{\pi(\frac a2)}}
\def\cL#1nu{\mathcal L_{{#1}, \nu}}
\def\cL#1a2{\mathcal L_{{#1}, \frac a2}}
\def\E#1nu{E_{{#1}, \nu}}
\def\E#1a2{E_{{#1}, \frac a2}}
\begin{document}
\title[Hua operators and Poisson transform on line bundles]
{Hua operators, Poisson transform and relative discrete series
on line bundle over bounded symmetric domains}
\author{Khalid Koufany}
\address{Khalid Koufany --
Institut {\'E}lie Cartan, 
UMR 7502, Universit{\'e} Henri Poincar{\'e}
    (Nancy 1) B.P. 239, 
F-54506 Vand{\oe}uvre-l{\`e}s-Nancy cedex, 
France
}
\email{khalid.koufany@iecn.u-nancy.fr}
\author{Genkai Zhang}
\address{Genkai Zhang -- Department of Mathematics, Chalmers University of
  Technology and G{\"o}teborg University, S-41296 G{\"o}teborg, Sweden}
\email{genkai@math.chalmers.se}
 
\thanks{\scriptsize \sl 
Research by  G. Zhang 
supported by Swedish Science Council (VR).
}
\keywords{\scriptsize Bounded symmetric domains, Shilov boundary,
invariant differential operators, eigenfunctions,
Poisson transform, 
 Hua systems}

\begin{abstract}  
Let $\Omega=G/K$  be a bounded symmetric
domain and $S=K/L$  its Shilov boundary.
We consider the action of $G$ 
on sections
of a homogeneous line bundle over $\Omega$ 
 and the
corresponding eigenspaces of $G$-invariant differential
operators.
The Poisson transform 
maps  hyperfunctions   on  $S$ to the eigenspaces. 
We characterize
the image in terms of twisted Hua operators.
For some special parameters   the Poisson 
transform  is of  Szeg\"o type whose
image is in a relative discrete
series; we compute the corresponding 
elements in the discrete series.
\end{abstract}

\maketitle

\begin{center}
April 2011
\end{center}

\section{Introduction}
Let $X=G/K$ be a Riemannian symmetric space of non compact type. 
It is  known that the Poisson transform maps certain
 parabolically induced representation spaces
into null spaces of some systems of differential equations.
For a minimal parabolic subgroup $P_\text{min}\subset G$,
 Kashiwara {\sl et al.} proved \cite{K et Al.}, that the Poisson transform gives a $G$-isomorphism from the set of hyperfunctions on the maximal boundary $G/P_\text{min}$  onto the joint eigenspace of invariant differential operators on $X$, thus proving the Helgason conjecture \cite{Helgason1}.
We shall be interested
in the case of a Hermitian symmetric space $G/K$
 and the Poisson transform
corresponding a maximal (instead of minimal) parabolic subgroup $P_\text{max}\subset G$
with $G/P$ being the Shilov boundary of $G/K$.
For a certain special parameter
of the induced representation
the image of the transform is a subspace of harmonic functions
on symmetric space $G/K$. 
The precise description
of the image for tube domains
is given in \cite{Johnson, Johnson-Koranyi, Lassalle}
in terms of Hua-harmonic functions 
introduced  earlier by Hua \cite{Hua}.
Its generalization to  non-tube cases
is done by Berline and Vergne \cite{Berline-Vergne}. 
For tube domains with general parameters
 Shimeno  \cite{Shimeno2} 
proved an analogue of Kashiwara {\it et al.} theorem 
for $P_\text{max}\subset G$.
  More precisely, he proved 
that the Poisson transform is a $G$-isomorphism 
from the space of hyperfunctions
 on  the Shilov boundary
 onto the space of eigenfunctions of the Hua operator
 of the second order. \\ The generalization to the non-tube bounded symmetric domains has been given in our earlier paper \cite{Koufany-Zhang}.

A more interesting problem is to consider homogeneous
 line bundles over $\Omega$ with the corresponding weighted action of $G$. 
In this setting Shimeno \cite{Shimeno1} generalized
 the Kashiwara {\it et al.} theorem to homogeneous line bundles over Hermitian symmetric spaces of tube type $G/K$
for minimal parabolic $P_\text{min}\subset G$.
 That is  for a given a line bundle $E_\nu$
(see below)
and for a generic parameter depending on $\nu$
of the induced representation
from a minimal parabolic $P_\text{min}\subset G$.
subgroup,
 the Poisson transform maps
as isomorphisms from hyperfunction-valued 
sections of a line bundle over 
$G/P_\text{min}$
 onto a space of eigenfunctions.
We shall find  characterizations of the Poisson integrals of hyperfunction-valued sections of 
a line bundle over the Shilov boundary 
$G/P_\text{min}$ of a bounded symmetric domain
for generic parameters. 
The Poisson transform becomes more interesting for larger parameters of $\nu$
as there appear relative discrete series, in particular,
the weighted Bergman spaces,
in the Plancherel formula \cite{Shimeno-jfa}, 
the Poisson transform on Shilov boundary for the corresponding parameter
is obviously not injective.  We shall compute
explicitly the image for some of the relative
discrete series. We proceed with some more precise
description of our result.

Let $\Omega=G/K$ be a bounded symmetric domain of tube type
of rank  $r$ and  genus $p$. 
For $\nu\in p\mathbb{Z}$ we consider the
 (unique) character $\tau_\nu$ of $K$ and the corresponding homogeneous line bundle $E_\nu$ over $\Omega$. 
We identify $C^\infty$-sections of $E_\nu$ with the space $C^\infty(G/K,\tau_\nu)$ of $C^\infty$-functions on $G$ such
 that $f(gk)=\tau_\nu(k)^{-1}f(g)$. 
We   consider the generalized Poisson transform 
$(\mathcal{P}_{s,\nu}f)(z)=\int_S P_{s,\nu}(z,u)f(u)du$ where $P_{s,\nu}$
is the generalized Shilov kernel and $S=G/P_1$  the Shilov boundary of $\Omega$.  The subgroup $P_1$ is a maximal parabolic subgroup of $G$.
For $s\in\mathbb{C}$,  let $\mathcal{B}(S,s,\nu)$ be the space of hyperfunction-valued sections on $S=G/P_1$ associated with the character of $P_1$ given by 
$man\mapsto e^{(s\rho_0-\rho_1)(\log a)}\tau_\nu(m)$.
 Then for $\lambda_s=\rho+2n(s-1)\xi_e^*-\nu r\xi_e^*$, 
the space $\mathcal{B}(S,s,\nu)$ can be considered as a 
subspace of $\mathcal{B}(G/P;L_{\lambda_s,\nu})$ of hyperfunction-valued sections of a line bundle over  $G/P$. We construct
certain Hua operators on  $G/K$
and  we prove (Theorem \ref{theorem-tube-domains}) that for generic values of $s$ the Poisson transform $\mathcal{P}_{s,\nu}$ is a $G$-isomorphism
between $\mathcal{B}(S,s,\nu)$ and the space of eigenfunctions of the Hua operator of the second order.  

If $\Omega$ is a non-tube type domain it is known
that Hua-type operators of second order will not
be sufficient to characterize the image of Poisson transform
on the Shilov boundary. However for type one non-tube domains
of $r\times (r+b)$-matrices the Lie
algebra  $\mathfrak k_{\mathbb C}$ 
is
a direct sum of $\mathfrak {gl}_{r}$ and
 $\mathfrak {sl}_{r+b}$ and
one can construct \cite{Berline-Vergne}
a second order
Hua operator by taking certain projection
on the summand $\mathfrak {gl}_{r}$.
We prove a corresponding
result for line
bundles in this case; see
 \ref{theorem-type-one-domains}.

For singular value of $s$ we prove (Theorem \ref{Th-rds}) that the Poisson transform is a Szeg\"o type map of principal series representation onto the relative discrete series representation. We compute explicitly the Poisson transform on certain spherical polynomials
on the Shilov boundary.\\

The paper is organized as follows.  In \S2 we  recall very briefly
the Jordan algebraic characterization of bounded symmetric domains. In \S3 we introduce the line bundle over the bounded symmetric domain $\Omega$. The generalized Poisson transform of hyperfunction-valued sections 
on the maximal and the Shilov boundaries are studied in \S4. The characterization of Poisson integrals of 
hyperfunction-valued sections on the Shilov boundary is given in \S5. In this section we also recall our geometric construction of the Hua operator. The necessary condition is proved in \S6. In \S7 we compute the radial part of the Hua system and prove sufficiency condition. Finally in \S8 we show a relationship   between the Poisson transform, Hua operator  and the relative discrete series representation.\\

After a preliminary version of
this paper was finished we were informed by Professor T. Oshima that 
he and N. Shimeno have obtained in \cite{Shimeno-Oshima} 
some similar results about Poisson transforms and Hua operators.
Professor A. Koranyi communicated also his
recent preprint \cite{Koranyi-2011}
to us where he proved the necessity of Theorem 5.2
using different methods. In particular some of the questions
posed in that paper are answered here.




\section{Bounded symmetric domains and Jordan triples}

We begin with a brief review of necessary  facts
on bounded symmetric domains and Jordan triple systems.
Let $V$, $\dim V=n$ be a complex vector space, $\Omega\subset V$ a 
irreducible  bounded symmetric domain. Let $Aut(\Omega)$ be the group of all biholomorphic automorphisms of $\Omega$, let $G$ be the connected component of the identity of $Aut(\Omega)$, and   let   $K$ be the isotropy subgroup of $G$ at the point $0\in\Omega$. As a symmetric space, $\Omega=G/K$. 
 The group $K$ acts as linear
transformations on $V$ and we can thus identify $K$
also as a subgroup of $GL(V)$. 

Let  $\mathfrak{g}$ be  the Lie algebra of $G$ and
$\mathfrak{g}_{\mathbb C}$
its  complexification.
The algebra $\mathfrak{g}
$ has the Cartan decomposition
$\mathfrak{g}=\mathfrak{k}\oplus \mathfrak{p}$
and $\mathfrak{g}_{\mathbb C}
=\mathfrak p^+ +
\mathfrak k_{\mathbb C}
+\mathfrak p^-
$ the Harish-Chandra decomposition.
Denote $Z_0$  the element
 in the center of $\mathfrak k$ which defines the complex structure on
 $\mathfrak{p}^+$,
 i.e., $\ad(Z_0) v= iv$ for $v\in \mathfrak{p}^+$. 
We can thus identify $\mathfrak{p}^+$ with $V$,
$\mathfrak{p}^+=V$.

There exists a quadratic map $Q : V\to End(\bar{V},V)$ (where $\bar{V}$ is the complex conjugate of $V$), such that $\mathfrak{p}=\{\xi_v \; ;\; v\in V\}$
as holomorphic vector fields, where $\xi_v(z):=v-Q(z)\bar{v}$. 
Define $D(z,\bar{v})w:=(Q_{z+w}-Q_z-Q_w)\bar v$. It 
 satisfies  
$$
D(z, \bar{v})w=D(w, \bar{v})z
\; ,\;
 [D(u,\bar{v}),D(z,\bar{w})]=D(\{u\;\bar{v}\;z\},\bar{w})-D(z,\overline{\{w\;\bar{u}\;v\}})
$$
so $V$ is a Jordan triple system. Furthermore we have
$[X,\xi_z]=\xi_{Xz}$ for $X\in\mathfrak{k}$, $z\in V$ and $[\xi_z,\xi_v]=D(z,\bar{v})-D(v,\bar{z})$ for all $z,v\in V$. 
In this realization elements in $\fp^-$ are of the form
$\{-Q(z)\bar v\}$ which we write as $\bar v$. Thus
\begin{equation}
\label{comm}
[v, \bar w]=D(z, \bar w).
\end{equation}
We define
\begin{equation}
\label{scal-norm}
\langle z,w\rangle
 =\frac{1}{p}\tr D(z,\bar{w}),
\end{equation}
where $\tr$ is the trace functional on $End(V)$
and $p$ is the genus defined below. 
It
 is a $K-$invariant Hermitian product on $V$.

The group $K$ acts on $V$ by unitary transformations. The domain $\Omega$ is realized as the open unit ball of $V$ with respect to the spectral norm, 
\begin{equation}
\Omega=\{z\in V\; :\; \|D(z,\bar{z})\|^2<2\},
\end{equation}
where $\|D(z,\bar{z})\|$ is the operator norm of $D(z,\bar{z})$ on the Hilbert space $(V, \langle \cdot, \cdot\rangle)$.

An element $e\in V$ is a tripotent if $\{e\;\bar{e}\; e\}=e$. The subspaces $V_\lambda(e)=\text{ker}(D(e,\bar{e})-\lambda\text{id})$ are called Pierce $\lambda-$spaces. Then we have $V=V_0(e)\oplus V_1(e)\oplus V_2(e)$.
 Two tripotents $e$ and $c$ are orthogonal if $D(e,\bar{c})=0$.  A tripotent $e$ is minimal if it cannot be written as the sum of two non-zero orthogonal tripotents. With the above normalization of inner product
we have $\langle e, e\rangle=1$ for minimal tripotents $e$.
The tripotent $e$ is maximal if $V_0(e)=0$.
 
  A frame is a maximal family of pairwise orthogonal, minimal tripotents. It is known that the group $K$ acts transitively on frames. In particular, the cardinality of all frames is the same, and it is equal to the rank $r$ of $\Omega$. 
  
Let us choose and fix a frame $\{e_j\}_{j=1}^r$ in $V$. Then, by transitivity of $K$ on the frames, each element $z\in V$ admits a polar decomposition $z=k\sum_{j=1}^rs_je_j$, where $k\in K$ and $s_j=s_j(z)$ are the singular numbers of $z$.
Denote  $e$ the maximal tripotent $e=e_1+\ldots+e_r$. The Shilov boundary of $\Omega$ is $S=K/K_1$ where $K_1=\{k\in K\; :\; k \;e=e\}$. It is know that $S$ coincides with the set of maximal tripotents of $V$.
 
 The joint Peirce spaces are
  \begin{equation}
V_{ij}=\{z\in Z\; :\; D(e_k,\bar{e}_k)z=(\delta_{ik}+\delta_{jk})z,\; \forall k\},\; 0\leq i\leq j\leq r
\end{equation}
then $V=\bigoplus_{0\leq i\leq j\leq r}V_{ij}$, $V_{00}=0$, and $V_{ii}=\mathbb{C}e_i$ $(i>0)$.

The  triple of integers $(r,a,b)$ with
 \begin{equation}
a:=\dim V_{jk}\; (1\leq j<k\leq r);\; b:=\dim V_{0j}\; (1\leq j\leq r)
\end{equation}
is independent of the choice of the frame and uniquely determines the Jordan triple. Notice that $b=0$ exactly if $V$ is a Jordan algebra which is equivalent to say that $\Omega$ is of  tube type.

The Peirce decomposition associated with $e$ is then $V=V_2\oplus V_1$ where
\begin{equation}
V_2=\sum_{1\leq j\leq k\leq r} V_{jk}\;\; V_1=\sum_{j=1}^rV_{0j}.
\end{equation}
 
 Let $n_1=\dim V_1$ and $n_2=\dim V_2$, then
 $$n_1=rn,\;\; n_2=r+\frac{r(r-1)}{2}a \;\;\text{and}\;\; n=n_1+n_2.$$
 The genus of $\Omega$ is 
 $$p=p(\Omega)=\frac{1}{r}\tr D(e,\bar{e})=(r-1)a+b+2.$$


Let $\frak{a}=\mathbb{R}\xi_{e_1}+\ldots +\mathbb{R}\xi_{e_r}$. 
Then   $\mathfrak{a}$ is a maximal abelian subspace of
 $\mathfrak{p}$ with basis  vectors $ \{\xi_{e_1},\ldots ,\xi_{e_r}\}$. 
Its dual basis in $\mathfrak{a}^*$ 
will be denoted by $\{\frac {\beta_j}2\}_{j=1}^r\subset \mathfrak{a}^*$, i.e., 
\begin{equation}
\beta_j(\xi_{e_j})=2\delta_{jk},\;\;1\leq j,k\leq r.
\end{equation}
We define an ordering on $\mathfrak{a}^*$ via
\begin{equation}
\beta_r>\beta_{r-1}>\cdots>\beta_1>0.
\end{equation}
It is known that the restricted roots system $\Sigma(\mathfrak{g},\mathfrak{a})$ of $\mathfrak{g}$ relative to $\mathfrak{a}$ is of type $C_r$ or $BC_r$, it consists of the roots $\pm\beta_j$ $(1\leq j\leq r)$ with multiplicity 1, the roots $\pm\frac{1}{2}\beta_j\pm\frac{1}{2}\beta_k$ $(1\leq j\not=k\leq r)$ with multiplicity $a$, and possibly the roots $\pm\frac{1}{2}\beta_j$ $(1\leq j\leq k)$ with multiplicity $2b$. The set of positive roots 
$\Sigma^+(\mathfrak{g},\mathfrak{a})$ consists of $\frac{1}{2}(\beta_j\pm\beta_k)$ $(1\leq j<k\leq r)$, $\beta_j$ and $\frac{1}{2}\beta_j$ $(1\leq j\leq r)$.

The half sum of positive roots is given by
\begin{equation}
\underline{\rho}=\sum_{j=1}^r\rho_j\beta_j=\sum_{j=1}^r\frac{b+1+a(j-1)}{2}\beta_j.
\end{equation}

Let $A$ be the analytic subgroup of $G$ corresponding to $\mathfrak{a}$. Let $\mathfrak{n}^+=\sum_{\alpha \in \Sigma^+}\mathfrak{g}^\alpha$ and $\mathfrak{n}^-=\theta(\mathfrak{n}^+)$. Let $N^+$ and $N^-$ be the corresponding analytic subgroups of $G$.  Let $M$ be the centralizer of $\mathfrak{a}$ in $K$. The subgroup $P=MAN^+$ is a minimal parabolic subgroup of $G$. 

The set
$\Lambda=\{\alpha_1,\; \ldots,\; \alpha_{r-1},\;
\alpha_r\}$ of
simple roots in $\Sigma^+$ is such that  
\begin{equation*}
\alpha_j=\frac{1}{2}(\beta_{r-j+1}-\beta_{r-j}),\; 1\leq j\leq r-1
\end{equation*}and
\begin{equation*}
\alpha_r=\begin{cases} \beta_1 & \text{for the tube case}\\ \frac{1}{2}\beta_1 & \text{for the non-tube case}.\end{cases}
\end{equation*}
Let $\Lambda_1=\{\alpha_1,\ldots,\alpha_{r-1}\}$ 
and $P_1$ the corresponding standard parabolic subgroup of $G$ with the Langlands decomposition $P_1=M_1A_1N_1^+$ such that $A_1\subset A$. Then the Lie algebra of $A_1$ is $\mathfrak{a}_1=\mathbb{R}\xi_e$  where $\xi_e=\xi_{e_1}+\ldots+\xi_{e_r}$.\\
The spaces $G/P=K/M$ is the Furstenberg or maximal boundary,
and $G/P_1=K/K_1$, with $K_1=M_1\cap K$,  is the Shilov boundary of $G/K$.

\section{Line bundle over $\Omega$}

Denote $Z=\frac{p}{n}Z_0$
where $Z_0$ is the center element
defined in \S2. 
The group $K$ is factorized as $K=\exp(\mathbb{R}Z)K_s$, 
where $K_s$ is the analytic subgroup of $K$ with Lie algebra $\mathfrak{k}_s=[\mathfrak{k},\mathfrak{k}]$.

For a fixed $\nu \in p\mathbb{Z}$  consider the character
 $\tau_\nu$ of $K$ defined by 
$$\tau_\nu(k)=
\begin{cases}
e^{it\nu} & \text{if }\; k=\exp(tZ)
\in\exp(\mathbb{R}Z)\\
1            &\text{if }\; k\in K_s.
\end{cases}$$
In particular we have $J_k(z)^{\frac \nu p} =e^{it\nu}$
for $k=\exp(tZ)$, $z\in \Omega$. Thus
$J_k(z)^{\frac \nu p} =\tau_\nu(k)$,
$k\in K$.
 Here we denote $J_g, g\in G$,  the Jacobian of
the holomorphic mappings $g$ on $\Omega$.
  See \cite{Schlich} and \cite{Dooley-Orsted-Zhang}.\\



 Let $E_\nu$ be the homogeneous line bundle $G\times_{K}
 \mathbb C $ over $G/K=\Omega$, where $K$ acts 
on $\mathbb C$
via the one dimensional representation $\tau_\nu$.
The space $C^\infty(\Omega, E_\nu)$ of smooth sections of $E_\nu$
is by definition
 the space $C^\infty(\Omega, E_\nu)=C^\infty(G/K;\tau_\nu)$
 of $C^\infty$-functions $F$
on $G$ such that
  $$F(gk)=\tau_\nu(k)^{-1}F(g).$$
We will trivialize the bundle via the map $[(g, c)]\in E_\nu
\mapsto (g\cdot 0, J_g(0)^{\frac \nu p}c)\in \Omega \times\mathbb C$
and thus identifies $C^\infty(G/K;\tau_\nu)$ also as
the space of $C^\infty$-functions on $\Omega$ with $G$
acting as
\begin{equation}\label{g-weigt-act}
g\in G:  \quad f(z)\mapsto J_{g^{-1}}(z)^{\frac \nu p} f(g^{-1}z).
\end{equation}

  
 
Let $D_\nu(G/K)$ denote the space of $G$-invariant differential operators on $G/K$ acting on  $C^\infty(G/K;\tau_\nu)$. 
We have the Harish-Chandra isomorphism \cite{Shimeno1}
$$\gamma_\nu : D_\nu(G/K)\simeq U(\mathfrak{a})^W$$
where $U(\mathfrak{a})^W$ denote the set of $W$-invariant elements in the enveloping algebra $U(\mathfrak{a})$.

 The characters of $D_\nu(G/K)$ are given by
$$\chi_{\lambda, \nu}(D)=\gamma_\nu(D)(\lambda),\;\;\;\;   D\in D_\nu(G/K),\;\; \lambda\in\mathfrak{a}_\mathbb{C}^*.$$

For $\lambda\in\mathfrak{a}_\mathbb{C}^*$ we define  $\mathcal{A}(G/K,\mathcal{M}_{\lambda,\nu})$ to be the space of   functions $\varphi\in C^\infty(G/K;\tau_\nu)$  satisfying the system of differential equations
\begin{equation}\label{efdo}
\mathcal{M}_{\lambda,\nu} \; :\; D \varphi =\chi_{\lambda,\nu} (D) \varphi \;\;\; D\in D_\nu(G/K).
\end{equation}

Finally, let $\mathcal{B}(G/P;L_{\lambda,\nu})$ be the space of hyperfunctions $f$ on $G$ satisfying
$$f(gman)=e^{(\lambda-\rho)(\log a)}\tau_\nu(m)^{-1}f(g)$$
for all $g\in G$, $m\in M$, $a\in A$, $n\in N^+$. The space $\mathcal{B}(G/P;L_{\lambda,\nu})$ is a $G$-submodule of $\mathcal{B}(G)$ and can be identified with the space of hyperfunction valued sections of the line bundle $L_{\lambda,\nu}$ on $G/P$ associated with the character of $P$ given by $man\mapsto e^{(\rho-\lambda)(\log a)}\tau_\nu(m)$, $m\in M$, $a\in A$, $n\in N^+$.

\section{Poisson transform}

\subsection{Poisson integrals on  $G/P$ for a minimal $P$} 
Let $\lambda\in\mathfrak{a}^*_\mathbb{C}$ and $\nu\in\mathbb{C}$. We define  the Poisson transform   $\mathcal{P}_{\lambda,\nu}$  by
\begin{equation}
(\mathcal{P}_{\lambda,\nu}f)
(g)=\int_K f(gk)\tau_\nu(k) dk\;\; g\in G
\end{equation}
for any 
  $f\in \mathcal{B}(G/P,L_{\lambda,\nu})$. 
  
As elements $f\in \mathcal{B}(G/P,L_{\lambda,\nu})$
are uniquely determined by its restriction to $K$
it is natural to express the integral above as 
on $K$. Indeed, for $g\in G$ denote $\kappa(g)\in K$ and  $H(g)\in\mathfrak{a}$
 to be elements  uniquely determined by   
$$g\in \kappa(g)\exp(H(g))N^+\subset KAN^+=G.$$
 Then
 \begin{equation}
(\mathcal{P}_{\lambda,\nu}f)(g)=\int_K f(k)\tau_\nu(\kappa(g^{-1}k))e^{-(\lambda+\rho)(H(g^{-1}k))}dk
\end{equation}
and maps $\mathcal{B}(G/P,L_{\lambda,\nu})$ into 
$\mathcal{A}(G/K,\mathcal{M}_{\lambda,\nu})$, see \cite[Theorem 5.2]{Shimeno1}\\

The Harish-Chandra $c$ function
$$c_\nu(\lambda)
=\int_{N^-} e^{(\lambda+\rho)(H(\bar{n}))}\tau_\nu(\kappa(\bar{n}))d\bar{n}$$
can be written as
$$
c_\nu(\lambda)=\frac{f_{\nu}(\lambda)}
{e_{\nu}(\lambda)}, 
$$
where  the denominator $e_{\nu}(\lambda)
$ is given, 
in our normalization, by
$$ 
e_\nu(\lambda)=\prod_{j>k} \Gamma(\frac 12 a +\lam_j \pm \lam_k)
\prod_{j} \Gamma(\frac 12(b+1 +2\lam_j +\nu))
\Gamma(\frac 12(b+1 +2\lam_j -\nu));
$$
see \cite{Schlich} and \cite[page 227]{Shimeno1}.

The following theorem characterizes the range of the Poisson transform.
\begin{theorem}[\cite{Shimeno1}, Theorem 8.1]\label{Shimeno-Thm} If $\lambda\in\mathfrak{a}_\mathbb{C}^*$ and $\nu\in\mathbb{C}$ satisfy the conditions
\begin{equation}
-2\frac{\langle \lambda,\alpha\rangle}{\langle \alpha,\alpha\rangle}\notin\{1,2,3,\ldots\} \;\; \text{ for any } \alpha\in\Sigma^+
\end{equation}
\begin{equation}
e_{\nu}(\lambda)\not=0,
\end{equation}
then the Poisson transform $\mathcal{P}_{\lambda,\nu}$ is a $G$-isomorphism of $\mathcal{B}(G/P,L_{\lambda,\nu})$ onto $\mathcal{A}(G/K,\mathcal{M}_{\lambda,\nu})$.
\end{theorem}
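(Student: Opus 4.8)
The plan is to treat this as the line-bundle version of the Helgason conjecture, following the scheme of Kashiwara et al.\ \cite{K et Al.} as adapted to homogeneous line bundles by Shimeno \cite{Shimeno1}. The mapping property $\mathcal{P}_{\lambda,\nu}\colon \mathcal{B}(G/P,L_{\lambda,\nu})\to\mathcal{A}(G/K,\mathcal{M}_{\lambda,\nu})$ is already recorded above, so the content is to prove that, under the two hypotheses, this map is both injective and surjective. The two conditions play complementary roles: the non-integrality condition $-2\langle\lambda,\alpha\rangle/\langle\alpha,\alpha\rangle\notin\{1,2,3,\ldots\}$ controls the \emph{analytic} side, namely the asymptotic behaviour of eigenfunctions, while $e_\nu(\lambda)\neq 0$ controls \emph{injectivity} through the non-vanishing of the denominator of the Harish-Chandra $c$-function $c_\nu(\lambda)$.

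First I would analyse the radial asymptotics of an eigenfunction $\varphi\in\mathcal{A}(G/K,\mathcal{M}_{\lambda,\nu})$. Writing $G=KA^+K$ and using the Harish-Chandra isomorphism $\gamma_\nu$, the system $\mathcal{M}_{\lambda,\nu}$ becomes, after passage to radial parts, a holonomic system with regular singularities along the walls of the chamber $A^+$, whose characteristic exponents are the $w\lambda-\rho$, $w\in W$. The hypothesis $-2\langle\lambda,\alpha\rangle/\langle\alpha,\alpha\rangle\notin\mathbb{Z}_{>0}$ guarantees that these exponents are mutually non-resonant, so that each eigenfunction admits a convergent expansion $\varphi\sim\sum_{w\in W}a_w\,\Phi_{w\lambda,\nu}$ with no logarithmic terms, where $\Phi_{\lambda,\nu}$ is the line-bundle Harish-Chandra series. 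This is the step where the first hypothesis is indispensable, and it is also where the $\tau_\nu$-twist must be inserted carefully: one checks that the character only shifts the relevant indicial roots, reflected in the factors $\Gamma(\tfrac12(b+1+2\lambda_j\pm\nu))$, and does not spoil the regular-singularity structure.

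Next I would construct the boundary value map. By the theory of boundary values for systems with regular singularities, as in \cite{K et Al.} and \cite{Shimeno1}, the leading coefficient of the expansion along $A^+$ defines a $G$-equivariant map $\beta_{\lambda,\nu}\colon \mathcal{A}(G/K,\mathcal{M}_{\lambda,\nu})\to\mathcal{B}(G/P,L_{\lambda,\nu})$. The crucial computation is the composition $\beta_{\lambda,\nu}\circ\mathcal{P}_{\lambda,\nu}$: evaluating the asymptotics of a Poisson integral term by term, one finds that it equals multiplication by a product of $c$-functions, the relevant factor being governed by $e_\nu(\lambda)$ through the explicit formula quoted above. Since $e_\nu(\lambda)\neq 0$, this composition is a nonzero scalar, which immediately yields injectivity of $\mathcal{P}_{\lambda,\nu}$. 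For surjectivity, given $\varphi\in\mathcal{A}(G/K,\mathcal{M}_{\lambda,\nu})$ one sets $f=\beta_{\lambda,\nu}(\varphi)$, suitably normalised; then $\varphi$ and $\mathcal{P}_{\lambda,\nu}f$ are two eigenfunctions with the same boundary data, and the uniqueness in the boundary value problem, again guaranteed by the non-resonance of the exponents, forces $\varphi=\mathcal{P}_{\lambda,\nu}f$.

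I expect the main obstacle to be surjectivity, and more precisely the rigorous construction of the boundary value map $\beta_{\lambda,\nu}$ at the level of hyperfunctions. Establishing that the twisted eigensystem $\mathcal{M}_{\lambda,\nu}$ has regular singularities in the required sense, and that the leading-term map lands surjectively in the hyperfunction sections $\mathcal{B}(G/P,L_{\lambda,\nu})$, requires the full microlocal machinery behind \cite{K et Al.}; the genuinely new point over the trivial-bundle case is to verify that conjugation by the character $\tau_\nu$ produces only a harmless shift of the indicial roots, so that the existing theory applies essentially verbatim. Once the boundary value map is in place, injectivity and surjectivity follow from the $c$-function computation and the exponent matching sketched above.
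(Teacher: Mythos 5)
The paper itself offers no proof of this theorem: it is quoted directly from \cite{Shimeno1} (Theorem 8.1), so the only ``proof'' in the paper is that citation. Your sketch --- regular-singularity boundary value theory for the twisted eigensystem in the style of \cite{K et Al.}, the identity $\beta_{\lambda,\nu}\circ\mathcal{P}_{\lambda,\nu}=c_\nu(\lambda)\,\mathrm{id}$ giving injectivity from $e_\nu(\lambda)\neq 0$, and surjectivity via the boundary value map under the non-resonance condition on the exponents $w\lambda-\rho$ --- is precisely the strategy of Shimeno's cited proof, so your proposal matches the argument the paper relies on.
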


\subsection{Poisson integrals on the Shilov boundary} 
 Let $h$ be the unique $K$-invariant polynomial on $\mathfrak{p}^+=V$ whose restriction on $\mathbb{R}e_1\oplus \cdots \oplus \mathbb{R}e_r$ is given by
 $$h(\sum_{j=1}^r t_je_j)=\prod_{j=1}^r(1-t_j^2).$$
 Let $h(z,w)$ be its polarization, i.e.
$h(z,w)$ is holomorphic on $z$ and antiholomorphic in $w$
 such that $h(z,z)=h(z)$.
For any complex number $s$ and for any $\nu$ we define the generalized Poisson kernel
\begin{equation}
P_{s,\nu}(z,u)=\left(\frac{h(z,z)}{|h(z,u)|^2}\right)^{s\frac{n}{r}} h(z,u)^{-\nu},\;\;\; z\in \Omega, u\in S.
\end{equation}

We also define the generalized Poisson transform
\begin{equation}
(\mathcal{P}_{s,\nu}
f
)(z)=\int_S P_{s,\nu}(z,u) f(u)du,\;\; \text{ for } f\in\mathcal{B}(S)
\end{equation}

Let 
$$\mathfrak{a}_1^\top=\sum_{j=1}^r\mathbb{R}(\xi_j-\xi_{j+1})$$
be the orthogonal complement of $\mathfrak{a}_1=\mathbb{R}\xi_e$ in $\mathfrak{a}$ with respect to the Killing form. We denote $\xi_e^*$ the dual vector such that $\xi_e^*(\xi_e)=1$  and we extend $\xi_e^*$ to $\mathfrak{a}$ by the orthogonal projection defined above. Then $\rho_1=\rho_{|_{\mathfrak{a}_1}}$ the restriction of $\rho$ to $\mathfrak{a}_1$ is given by $\rho_1=n\xi_e^*$.
 Define $\rho_0$ to be the linear form on $\mathfrak{a}_1$ such that $\rho_0=r\xi_e^*$.\\

Consider the following representation of $P_{1}=M_{1}A_{1}N_{1}$ given by  $\sigma_{s,\nu}={\tau_{\nu}}_{|_{M_{1}}}\otimes e^{s\rho_{0}-\rho}\otimes 1$   and let  $\mathcal{B}(G/P_1,s,\nu)$ be the space of hyperfunction valued sections of the line bundle   on $S=G/P_1$ corresponding to $\sigma_{s,\nu}$, i.e., the space of hyperfunctions $f$ on $G$ satisfying
$$f(gman)=e^{(s\rho_0-\rho_1)(\log a)} \tau_{\nu}(m)^{-1} f(g)$$
for all $g\in G$, $m\in M_1$, $a\in A_1$, $n\in N_1^+$.
We may fix $e\in S$ as a base point
and identify also
 $\mathcal{B}(G/P_1,s,\nu)$ with $\mathcal{B}(S)$.

For $s\in\mathbb{C}$ we define  $\lambda_s\in\mathfrak{a}^\ast$ by
\begin{equation}
\lambda_s=\rho+2n(s-1)\xi_e^*-\nu r\xi_e^*
\end{equation}
Under the identification of $C^\infty(G/K;\tau_\nu)$
(and thus its subspace $\mathcal{A}(G/K,\mathcal{M}_{\lambda,\nu})$)
as smooth functions on $\Omega$ we have 
$\mathcal{P}_{\lambda_s,\nu}$ coincides with $\mathcal{P}_{s,\nu}$. 
We omit the routine computations.

Let  $\nu\in p\mathbb{Z}$, then we have
\begin{equation}
\mathcal{B}(G/P_1;s,\nu)\subset \mathcal{B}(G/P;L_{\lambda_s,\nu})
\end{equation}
therefore
\begin{equation}
\mathcal{P}_{s,\nu}(\mathcal{B}(G/P_1;s,\nu))\subset \mathcal{A}(G/K;\mathcal{M}_{\lambda_s,\nu}).
\end{equation}

\section{Hua operators}
The Bergman reproducing kernel of $\Omega$ is $h(z,\bar{z})^{-p}$ 
up to a constant. It is also
 \begin{equation}
h(z,\bar{z})^{-p}=\det B(z,\bar{z})^{-1}
\end{equation}
where
$$B(z,\bar{w})=I-D(z,\bar{w})+Q(z)Q(\bar{w})
$$ 
is the Bergman operator.
Thus $\Omega=G/K$ 
is a K\"ahler manifold with the 
(normalized) Bergman metric
\begin{equation}
  \label{eq:berg-met}
(u,v)_z=\frac 1{p}\partial_u\bar{\partial}_v\log h(z,\bar{z})^{-p}=
\langle B(z,\bar{z})^{-1}u,v\rangle .
\end{equation}

Let $\tau$ be a finite-dimensional holomorphic representation of $K_\mathbb{C}$. 
Let $E$ be the  Hermitian vector bundle over $\Omega$ associated with $\tau$. 
Then there exists a unique connection operator $\nabla : C^\infty(\Omega,E) \to C^\infty(\Omega, E\otimes T')$ compatible with the Hermitian structure and the anti-holomorphic differentiation, where $T'=T'\Omega$ is the cotangent bundle over $\Omega$. That is, under the splitting in holomorphic and antiholomorphic parts, $T'=T'^{(1,0)}\oplus T'^{(0,1)}$, we have $\nabla=\mathcal{D}+\bar{\partial}$  where $\mathcal{D}$ is the differentiation operator on $E$,
 \begin{equation}
\mathcal{D} : C^\infty(\Omega,E) \to C^\infty(\Omega,E \otimes T'^{(1,0)} ) .
\end{equation}

 Let $\bar{D}$ be the invariant Cauchy-Riemann operator   on $E$ defined in \cite{Englis-Peetre, gz-shimura}, by
 \begin{equation}
\bar{D}f=B(z,\bar{z})\bar{\partial}f.
\end{equation}
Then
 \begin{equation}
\bar{D} : C^\infty(\Omega,E) \to C^\infty(\Omega,E \otimes T^{(1,0)} ) .
\end{equation}
We will use the following identifications, $T'^{(1,0)}_z=\mathfrak{p}^-=V'=\bar{V}$ and $T_z^{(1,0)}=T'^{(0,1)}=\mathfrak{p}^+=V$.\\
 
 We now specialize the above to the line bundle $E_\nu$ associated with the one-dimensional representation $\tau_\nu$. 
The Hua operator $\mathcal H_{\nu}$
is then defined as the resulting operator of the following diagram
$$
  \xymatrix{
    C^\infty(\Omega,E_\nu \otimes\mathfrak{p}^+)
 \ar@{^{}->}[r]^-{\mathcal{D}} \ar@{<-}[d]_{\bar{D}} 
 & C^\infty(\Omega;E_\nu\otimes\mathfrak{p}^+\otimes \mathfrak{p}^-) 
\ar@{->}[d]^{\text{Ad}_{\mathfrak{p}^+\otimes \mathfrak{p}^-}} \\
    C^\infty(\Omega,E_\nu) \ar@{.>}[r]_-{\mathcal{H_\nu}} &
 C^\infty(\Omega,E_\nu\otimes \mathfrak{k}_\mathbb{C})
  }.
  $$
 We may call $\mathcal H=\mathcal H_{\nu}$
the twisted Hua operator to differ it from the  trivial case 
$\nu=0$.

 \begin{remarks}
 \begin{enumerate}
\item We can change the order of $\bar D$
and $\mathcal D$ 
and define another Hua operator,
 $$\mathcal{H'}f= 
\text{Ad}_{\mathfrak{p}^+\otimes \mathfrak{p}^-} \bar{D}\mathcal{D}f,$$
 and one can  prove, by direct computations, the following relation
 \begin{equation}
 (\mathcal{H}-\mathcal{H}')F=-\frac{2n}{r}\nu F Id.
\end{equation}
 \item On other hand, following Johnson-Kor\`anyi \cite{Johnson-Koranyi}, the Hua operator can also be described using the enveloping algebra :  Let $\{E_\alpha\}$  be a basis of $\mathfrak{p}^+$ and $\{E_\alpha^*\}$  be the dual basis of $\mathfrak{p}^+$ with respect to the Killing form. Then the Hua operator is defined as element of $U(\mathfrak{g})_\mathbb{C}\otimes\mathfrak{k}_\mathbb{C}
$ by
 \begin{equation}
\mathcal{H}=\sum_{\alpha,\beta}E_\alpha E_\beta^*\otimes [E_\beta,E_\alpha^*],
\end{equation}
as operator acting from $C^{\infty}(E_{\nu})$
to  $C^{\infty}(E_{\nu}\otimes \mathfrak{k}_\mathbb{C})$.
However the previous definition
using  $\bar D$ and $\mathcal D$
allows direct computation using coordinates on $\Omega$
and hence has some computational (and also conceptual) advantage.
\end{enumerate} 
\end{remarks}

Note  that  $\mathcal{H}$ is by definition $G$-invariant
with respect to the actions of $G$ on two holomorphic bundles, i.e.,
\begin{equation}\label{inv-h}
\mathcal{H} (J_g(z)^{\frac \nu p} 
f(gz))
=J_g(z)^{\frac \nu p} \text{Ad}(dg(z)^{-1})
(\mathcal{H} f)(gz),
\end{equation}
where $\text{Ad}(dg(z)^{-1})
$ stands for the adjoint
action on $\mathfrak k_{\mathbb C}$
of  $dg(z)^{-1}:
 \mathfrak p^+=T_{gz}^{(1, 0)}
\mapsto \mathfrak p^+=T_{z}^{(1, 0)}$
on $\mathfrak p^+
$, via the defining action of $\mathfrak k_{\mathbb C}$
on $\mathfrak p^+$. (Indeed the element
$dg(z)^{-1}$ is in the group $K_{\mathbb C}\subset GL(V)=
GL(\mathfrak p^+)$  via the defining realization
$K\subset GL(V)$, thus $\text{Ad}(dg(z)^{-1})$
acts on $\mathfrak k_{\mathbb C}$; see \cite[Chapt II, Lemma 5.3]{Satake}.)

We state now our main result  for case of
tube domains. The proof is given in the
next two sections.

\begin{theorem}\label{theorem-tube-domains}
Let $\Omega$ be a bounded symmetric domain of tube type. Suppose $s\in \mathbb{C}$ satisfies the following condition
\begin{equation}\label{Shimeno-condition}
\frac{4n(1-s)}{r}\not\in \Lambda_1\cup\Lambda_2
\end{equation}
where $\Lambda_1=\mathbb{Z}_+-2\nu+2$, $\Lambda_2=2\mathbb{Z}_\geq-4\nu+4$. Then the Poisson transform $\mathcal{P}_{\lambda_s,\nu}$ is a $G$-isomorphism of $\mathcal{B}(S;s,\nu)$ onto the space of analytic functions $F$ on $\Omega$ such that
\begin{equation}\label{Hua-system}
\mathcal{H}F=2\frac{n}{r}s(\frac{n}{r}(s-1)+\nu) F  Id .
\end{equation}
\end{theorem}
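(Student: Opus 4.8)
The plan is to prove the two inclusions separately. By Theorem \ref{Shimeno-Thm} (Shimeno's range characterization for the \emph{minimal} parabolic $P$), under the genericity condition the Poisson transform $\mathcal{P}_{\lambda_s,\nu}$ is already a $G$-isomorphism of $\mathcal{B}(G/P;L_{\lambda_s,\nu})$ onto the joint eigenspace $\mathcal{A}(G/K;\mathcal{M}_{\lambda_s,\nu})$. So the content of Theorem \ref{theorem-tube-domains} is to pin down \emph{which} $\lambda$-eigenfunctions come from the smaller space $\mathcal{B}(S;s,\nu)$ of sections over the Shilov boundary, and to show this subspace is exactly carved out by the single second-order Hua equation \eqref{Hua-system}. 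First I would translate the genericity hypothesis \eqref{Shimeno-condition} into the conditions of Theorem \ref{Shimeno-Thm}: I must check that for $\lambda=\lambda_s=\rho+2n(s-1)\xi_e^*-\nu r\xi_e^*$ one has $-2\langle\lambda,\alpha\rangle/\langle\alpha,\alpha\rangle\notin\{1,2,\dots\}$ for all $\alpha\in\Sigma^+$ and that $e_\nu(\lambda_s)\neq 0$. Because $\lambda_s-\rho$ is a multiple of $\xi_e^*$, it pairs nontrivially only with the roots meeting $\mathfrak{a}_1=\mathbb{R}\xi_e$; the short roots $\frac12(\beta_j-\beta_k)$ are annihilated, so the integrality and the vanishing of the Gamma factors in $e_\nu(\lambda_s)$ reduce to arithmetic conditions on $4n(1-s)/r$, which is precisely the avoidance of $\Lambda_1\cup\Lambda_2$. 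I would carry out this bookkeeping using the explicit $\rho=\sum\frac{b+1+a(j-1)}2\beta_j$ (with $b=0$ in the tube case) and the product formula for $e_\nu(\lambda)$.

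Granting this reduction, the core is the \textbf{inclusion both ways at the level of eigenspaces}. For necessity (the image satisfies \eqref{Hua-system}), I would use the $G$-invariance \eqref{inv-h} of $\mathcal{H}$ together with the explicit form of the Poisson kernel $P_{s,\nu}(z,u)=(h(z,z)/|h(z,u)|^2)^{sn/r}h(z,u)^{-\nu}$. The natural move is to apply $\mathcal{H}$ directly to the kernel, viewed as a function of $z$ for fixed $u\in S$, and show it is an eigen-kernel with eigenvalue $2\frac nr s(\frac nr(s-1)+\nu)\,\mathrm{Id}$; integrating against $f\in\mathcal{B}(S)$ then gives \eqref{Hua-system} for $\mathcal{P}_{s,\nu}f$. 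By $K$-invariance it suffices to compute at the base point $z=0$ after moving an arbitrary point there with an element of $G$ and invoking \eqref{inv-h}; concretely one writes $\mathcal{H}=\sum_{\alpha,\beta}E_\alpha E_\beta^*\otimes[E_\beta,E_\alpha^*]$ and differentiates $\log h(z,u)$ and $\log h(z,z)$ twice. The key structural input is that for a point $u$ in the Shilov boundary $h(z,u)$ behaves specially (the function $h$ restricted to the maximal tripotent degenerates), which forces the $\mathfrak{k}_\mathbb{C}$-valued output to be a scalar multiple of the identity rather than a general element.

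For sufficiency I would compute the \textbf{radial part of the Hua system} along $A$, reducing the $\mathfrak{k}_\mathbb{C}$-valued second-order operator $\mathcal{H}$ to a scalar (or $W$-symmetric) differential operator in the variables $s_j$, and then match its eigenvalue equation against the characters $\chi_{\lambda,\nu}$. The point is to show that among all joint eigenfunctions in $\mathcal{A}(G/K;\mathcal{M}_{\lambda_s,\nu})$, satisfying the single equation \eqref{Hua-system} forces $\lambda$ to lie on the one-parameter family $\lambda_s$ and forces the boundary data to be supported on $S=G/P_1$ rather than all of $G/P$; equivalently, the radial Hua eigenvalue equation together with the second-order invariant operator determines the restriction to $\mathbb{R}\xi_e$. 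I would use the already-established fact (end of \S4) that $\mathcal{P}_{s,\nu}(\mathcal{B}(G/P_1;s,\nu))\subset\mathcal{A}(G/K;\mathcal{M}_{\lambda_s,\nu})$ to get one inclusion for free, and then argue that the Hua equation \eqref{Hua-system} cuts $\mathcal{A}(G/K;\mathcal{M}_{\lambda_s,\nu})$ down to exactly the $\mathcal{P}_{s,\nu}$-image by injectivity of $\mathcal{P}_{\lambda_s,\nu}$ from Theorem \ref{Shimeno-Thm}.

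\emph{The hard part} is the explicit computation of $\mathcal{H}P_{s,\nu}$ and of the radial part: one must handle the $\mathfrak{k}_\mathbb{C}$-valued (rather than scalar) output and show the projection onto the center of $\mathfrak{k}_\mathbb{C}$ carries the eigenvalue information while the semisimple part $[\mathfrak{k},\mathfrak{k}]$-component vanishes on the image. This is where the tube-type hypothesis $b=0$ is essential, since it is what guarantees that second-order Hua operators suffice (in the non-tube case the authors must pass to the $\mathfrak{gl}_r$-projection, as noted in the introduction). I expect the bulk of the technical work to live in \S6 (necessity, via the kernel computation using \eqref{inv-h} and the Jordan-triple identities for $D(z,\bar w)$ and $B(z,\bar w)$) and \S7 (the radial reduction), with the Gamma-factor bookkeeping for \eqref{Shimeno-condition} being routine but essential to legitimately invoke Theorem \ref{Shimeno-Thm}.
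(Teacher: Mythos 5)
Your necessity argument and your genericity bookkeeping are essentially the paper's: one applies $\mathcal{H}$ to the kernel at $z=0$ using the derivative formulas for $h$, uses that $D(u,\bar u)=2\,\mathrm{Id}$ for a maximal tripotent $u$ (this is the precise form of your remark that $h(z,u)$ ``behaves specially'' on $S$, and it is where tube type enters), and propagates to general $z$ via the invariance (\ref{inv-h}) together with the transformation law of $P_{s,\nu}$; this is exactly Theorem \ref{theorem-Hua-Poisson}. The gap is in your sufficiency argument.

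Sufficiency must start from an arbitrary analytic $F$ satisfying (\ref{Hua-system}), which is \emph{not} a priori a joint eigenfunction of the full algebra $D_\nu(G/K)$; your plan quietly assumes it is (``among all joint eigenfunctions in $\mathcal{A}(G/K;\mathcal{M}_{\lambda_s,\nu})$\dots''), and ``matching the radial eigenvalue equation against the characters $\chi_{\lambda,\nu}$'' does not bridge this. Projecting the single $\mathfrak{k}_{\mathbb{C}}$-valued Hua equation yields only one or two scalar invariant equations, whereas $D_\nu(G/K)$ has $r$ independent generators, so membership in $\mathcal{A}(G/K;\mathcal{M}_{\lambda_s,\nu})$ is precisely what has to be proved. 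The paper's mechanism, absent from your proposal, is: for each $g\in G$ the average $\Phi_g(z)=\int_K F(gkz)\,dk$ is $K$-invariant and, by the radial form of $\mathcal{H}$ (Theorem \ref{radial-part}), solves the system $\mathcal{H}_j\Phi=2\frac{n}{r}s(\frac{n}{r}(s-1)+\nu)\Phi$ for $j=1,\dots,r$; Yan's uniqueness theorem \cite{Yan} for this hypergeometric system forces $\Phi_g=\varphi_{\lambda_s,\nu}\cdot F(g\cdot 0)$, and this functional equation characterizes joint eigenfunctions by \cite[Theorem 3.2]{Shimeno-jfa}, which finally places $F$ in $\mathcal{A}(G/K;\mathcal{M}_{\lambda_s,\nu})$. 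Note that the radial part by itself only controls $K$-invariant functions; the averaging-plus-uniqueness step is what lets it say anything about a general $F$. Your final step has a second gap: once Theorem \ref{Shimeno-Thm} produces $f\in\mathcal{B}(G/P;L_{\lambda_s,\nu})$ with $F=\mathcal{P}_{\lambda_s,\nu}f$, injectivity of $\mathcal{P}_{\lambda_s,\nu}$ does not show that $f$ transforms correctly under $P_1$, i.e.\ lies in $\mathcal{B}(S;s,\nu)$; that requires the boundary-value analysis under condition (\ref{Shimeno-condition}), for which the paper appeals to \cite{Berline-Vergne} and \cite{Koufany-Zhang}.
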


Here $Id$ stands for the element $-iZ_0$, which
acts on $\fp^+$ as identity.

 \section{The necessary condition of the Hua equations}

The necessity in the above theorem is
a consequence of the following

\begin{theorem}\label{theorem-Hua-Poisson}
For any $u\in S$ the function $z\mapsto P_{s,\nu}(z,\bar{u})$ satisfies the following system of equations
\begin{equation}\label{formula-Hua-Poisson}
\mathcal{H}P_{s,\nu}
= 2\frac{n}{r}s[\frac{n}{r}(s-1)+\nu]P_{s,\nu} Id
\end{equation}
\end{theorem}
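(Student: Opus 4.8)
The plan is to use the $G$-equivariance of $\mathcal{H}$ recorded in \eqref{inv-h} to reduce the whole family of identities to a single scalar computation at the base point. Writing $F_u:=P_{s,\nu}(\cdot,u)$ and letting $\pi_\nu(g)$ denote the weighted action \eqref{g-weigt-act}, the transformation law of $h$ under $G$ (equivalently, of the Bergman kernel $h(z,\bar z)^{-p}$) shows that $\pi_\nu(g)F_u$ is a scalar multiple of $F_{gu}$, the scalar depending only on $g$ and $u$. Since $Id=-iZ_0$ is fixed by the adjoint action $\mathrm{Ad}(dg(z)^{-1})$ on $\mathfrak{k}_{\mathbb C}$ (conjugation fixes the identity operator on $\mathfrak{p}^+$), \eqref{inv-h} shows that the eigenequation $\mathcal{H}F_u=c\,F_u\,Id$ is transported by $g$ into the same equation for $F_{gu}$, with the same constant $c$. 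As $G$ acts transitively on $\Omega$ and $K$ acts transitively on $S$, the group $G$ is transitive on $\Omega\times S$; hence it suffices to verify the identity at the single pair $(z,u)=(0,e)$. Since $P_{s,\nu}(0,e)=1$, everything reduces to proving $(\mathcal{H}F_e)(0)=c\,Id$ with $c=2\frac nr s[\frac nr(s-1)+\nu]$.

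I would then compute $(\mathcal{H}F_e)(0)$ from the geometric definition $\mathcal{H}=\mathrm{Ad}_{\mathfrak{p}^+\otimes\mathfrak{p}^-}\circ\mathcal{D}\circ\bar D$. The advantage of the origin is that all connection data trivialize there: $B(0,\bar 0)=I$, so $\bar D=B(z,\bar z)\bar\partial$ becomes ordinary antiholomorphic differentiation, while the Chern connection forms of the tangent bundle (the Bergman metric satisfies $g_{\alpha\bar\beta}=\delta_{\alpha\beta}+O(|z|^2)$) and of $E_\nu$ (connection form proportional to $\partial\log h(z,\bar z)$) both vanish at $0$ because $\partial\log h(z,\bar z)|_0=0$. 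Choosing an orthonormal basis $\{v_\alpha\}$ of $\mathfrak{p}^+=V$ adapted to the Peirce decomposition and using $[v_\beta,\bar v_\alpha]=D(v_\beta,\bar v_\alpha)$ from \eqref{comm}, this yields
\[
(\mathcal{H}F_e)(0)=\sum_{\alpha,\beta}(\partial_\alpha\bar\partial_\beta F_e)(0)\,D(v_\beta,\bar v_\alpha).
\]
Writing $F_e=e^{\phi}$ with $\phi=\frac{sn}{r}\log h(z,z)-(\frac{sn}{r}+\nu)\log h(z,e)-\frac{sn}{r}\log h(e,z)$ and $\partial_\alpha\bar\partial_\beta F_e=e^{\phi}(\partial_\alpha\bar\partial_\beta\phi+\partial_\alpha\phi\,\bar\partial_\beta\phi)$, the only inputs needed are the Taylor coefficients $h(z,w)=1-\langle z,w\rangle+\cdots$ at the origin. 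The mixed second derivative $\partial_\alpha\bar\partial_\beta\phi|_0=-\frac{sn}{r}\delta_{\alpha\beta}$ produces the part of $c$ linear in $s$, whereas the product $\partial_\alpha\phi\,\bar\partial_\beta\phi|_0$ produces the $s^2$ term and, through the $\nu$-twist in $\phi$, the cross term $\nu s$.

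Assembling the two pieces finishes the proof. The diagonal part contracts to $-\frac{sn}{r}\sum_\alpha D(v_\alpha,\bar v_\alpha)$; since $V$ is $K$-irreducible this sum is $K$-invariant, hence a scalar multiple of $Id$, and taking traces with $\tr D(z,\bar w)=p\langle z,w\rangle$ gives $\sum_\alpha D(v_\alpha,\bar v_\alpha)=p\,Id$. The product part contracts to $\frac{sn}{r}(\frac{sn}{r}+\nu)\,D(e,\bar e)$, since $\sum_\beta\overline{\epsilon_\beta}v_\beta=e$ for $\epsilon_\beta=\langle v_\beta,e\rangle$. Here the tube hypothesis is essential: then $e$ is the Jordan unit, $V=V_2(e)$, and $D(e,\bar e)=2\,Id$, so this contribution is again scalar. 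Substituting $p=\frac{2n}{r}$ (valid in the tube case) collapses everything to $(\mathcal{H}F_e)(0)=\frac{2sn}{r}[\frac nr(s-1)+\nu]\,Id=c\,Id$. The main obstacle is the second step: one must justify carefully that at the origin $\bar D$ and $\mathcal{D}$ reduce to $\bar\partial$ and $\partial$ with no surviving connection or ordering corrections, and that the resulting $\mathfrak{k}_{\mathbb C}$-valued expression is genuinely a scalar multiple of $Id$. The latter holds precisely because $D(e,\bar e)=2\,Id$ in the tube case; for non-tube domains $D(e,\bar e)$ is not scalar on $\mathfrak{p}^+$, which is exactly why a second-order Hua operator no longer characterizes the image there.
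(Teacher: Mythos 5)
Your proposal is correct and takes essentially the same route as the paper's own proof: both rest on the $G$-equivariance (\ref{inv-h}) together with the transformation law of $P_{s,\nu}$ under $G$, plus a direct second-order computation at the origin that hinges on the same two Jordan identities, $D(e,\bar e)=2\,Id$ and $\sum_\alpha D(v_\alpha,\bar v_\alpha)=\frac{2n}{r}\,Id$ (equivalently $p\,Id$, since $p=\frac{2n}{r}$ in the tube case). The only cosmetic differences are that you additionally normalize $u=e$ via transitivity of $K$ on $S$, whereas the paper computes at $z=0$ for arbitrary $u\in S$ using the quasi-inverse differentiation formulas (\ref{deriv-h}), which, evaluated at the origin, are exactly your Taylor-coefficient computation.
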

  \begin{proof} We prove first the claim for $z=0$, in which case
we have
$$
\mathcal{H} f(0)=\sum_{\alpha, \beta}\partial_{\alpha}\bar \partial_{\beta} f(0) [e_\alpha, \bar e_{\beta}] .
$$
 
Recall the following formulas in \cite[Lemma 5.2]{Koufany-Zhang}: for any 
fixed $\bar{w}\in\bar{V}$ and any complex number $s$,
\begin{equation}\label{deriv-h}
 \bar{\partial} h(w, \bar z)^s=-s  h({w}, \bar z)^s w^{\bar{z}},\;\;\;
\partial h(z,\bar{w})^s=-s  h(z,\bar{w})^s \bar{w}^z
\end{equation}
where
 $$
w^{\bar z}=B(w,\bar{z})^{-1}(w-Q(w)\bar{z}), \;\; 
\bar w^{z}= \overline{w^{\bar z}}
$$
 are called  quasi-inverses 
of $w$ with respect to $\bar{z}$
and $\bar w$ with respect to ${z}$ respectively, viewed
as $(1, 0)$-form and $(0, 1)$-form in terms
of the Hermitian inner product (\ref{scal-norm}).

Then  we have
$$
  \bar{\partial}P_{s,\nu}(z,\bar{u})= \bar{\partial}
 \left(h(z,u)^{-\nu}\left(\frac{h(z,z)}{|h(z,u)|^2}\right)^{\frac{n}{r}s}
\right)   
   = -({\frac{n}{r}s})P_{s,\nu}(z,\bar{u})[z^{\bar{z}}-u^{\bar{z}}],
 $$
 and
 $$\begin{array}{rl}
 -({\frac{n}{r}s})\partial 
\left(P_{s,\nu}(z,\bar{u})[z^{\bar{z}}-u^{\bar{z}}]\right)= 
&-({\frac{n}{r}s})(\nu+\frac{n}{r}s)P_{s,\nu}(z,\bar{u})\; 
[z^{\bar{z}}-u^{\bar{z}}] \otimes \bar{u}^z
\\
 &+({\frac{n}{r}s})^2P_{s,\nu}(z,\bar{u}) \; 
 [z^{\bar{z}}-u^{\bar{z}}] \otimes\bar{z}^z
\\
 &-({\frac{n}{r}s})P_{s,\nu}(z,\bar{u})\;  \partial [z^{\bar{z}}-u^{\bar{z}}].
 \end{array}
 $$
 Using   (\ref{eq:berg-met})
 the last derivative can be written as
 $$
 \partial [z^{\bar{z}} -u^{\bar{z}}]=\partial(z^{\bar{z}})=\partial\bar{\partial}\log h(z,\bar{z})^{-1}=B(z,\bar{z})^{-1} Id
 $$
 where $Id$ is the identity form in $\mathfrak{p}^+\otimes \mathfrak{p}^-
=\mathfrak{p}^+\otimes (\mathfrak{p}^+)'$.
 Evaluating at $z=0$ we get then, 
by the commutation relation (\ref{comm})
 $$\begin{array}{rl}
 \mathcal{H}P_{s,\nu}(0,\bar{u})=&({\frac{n}{r}s})(\nu+\frac{n}{r}s)D(u,\bar{u})\\
 &+0\\
 &-({\frac{n}{r}s})\sum_{\alpha=1}^n D(e_\alpha,\bar{e}_\alpha).
  \end{array}
 $$
Since $D(u,\bar{u})=2I$ 
and $\sum_{\alpha=1}^n D(e_\alpha,\bar{e}_\alpha)=2\frac{n}{r} I$
by \cite[Lemma 5.1]{Koufany-Zhang},  the claim is proved
for $z=0$. 
Note furthermore that
the Poisson kernel satisfies
$$
P_{s, \nu}(gz, gu)
=J_g(z)^{\frac \nu p}
P_{s, \nu}(gz, gu) \overline{J_g(u)^{\frac \nu p}}.
$$
Thus the claim is true for general $z$ 
by the invariant property 
(\ref{inv-h}) of $\mathcal H$.
\end{proof}
  
\section{The sufficiency condition of the Hua equations} The aim of this section is to prove that, using the radial part of the Hua operator, each solution $F$ of the system (\ref{Hua-system}) satisfies  the system
of equations (\ref{efdo}).
 Then under the condition  
(\ref{Shimeno-condition}) it can  be
proved that  the boundary value of $F$ is contained in  $\mathcal{B}(S;s,\nu)$.\\

We start to show that eigenfunctions
of the Hua operator (\ref{Hua-system}) are all $\tau_\nu$-spherical
functions. We need
only to prove the claim for $K$-invariant functions $F$ on $\Omega$, i.e.,
$F(kz)=F(z), \, k\in K$.\\

 For that purpose  
we compute the radial part of the Hua operator. 
The functions $F(z)$ will be identified
as permutation invariant and even function $F(t)$
on the diagonal  $z=\sum_{j=1}^r t_j e_j$.
The operator $\mathcal H$ 
has the form
$$
\mathcal H 
F(z)= \sum_{j=1}^r \mathcal H_j F(z)
D(e_j, e_j),
$$
for some operators $\mathcal{H}_j$ in $t=(t_1, \cdots, t_r)$. It's
convenient to find the radial part of $4\mathcal{H}$
(due to the usual convention that $\bar\partial\partial
=\frac 14 (\partial_x^2 +\partial_y^2)=
\frac 14 (\partial_r^2 +\frac 1r \partial_r)$
for radial functions in $z\in \mathbb C$).

 \begin{theorem}\label{radial-part}
Let $\Ome$ be the tube type domain.
 Let $F$ be 
$\mathcal{C}^2$ and  $K-$invariant function, then for $a=\sum_{j=1}^r t_je_j$,
\begin{equation}\label{Hua_radial}
4\mathcal{H}
F(a)=\sum_{j=1}^r\mathcal{H}_jF(t_1,\ldots,t_r)D(e_j,\bar{e}_j),
\end{equation}
where the scalar-valued operators $\mathcal{H}_j$ are given by
$$
\mathcal{H}_j\!\!=\!\!
(1-t_j^2)^2\bigl(\frac{\partial^2}{\partial
  t_j^2}+\frac{1}{t_j}(1+ 2\nu )
\frac{\partial}{\partial t_j}\bigr)+$$
$$+\frac a2 \sum_{k\not=j}(1-t_j^2)(1-t_k^2)\Bigl[\frac{1}{t_j-t_k}\bigl(\frac{\partial
}{\partial t_j}-\frac{\partial }{\partial t_k}\bigr)
+\frac{1}{t_j+t_k}\bigl(\frac{\partial
}{\partial t_j}+\frac{\partial }{\partial t_k}\bigr)\Bigr]+$$
$$
+(-2\nu)(1-t_j^2)\frac{1}{t_j}\frac{\partial
  }{\partial t_j}.$$
\end{theorem}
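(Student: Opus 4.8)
The plan is to reduce everything to $K$-invariant functions and to compute $\mathcal{H}$ directly at a diagonal point $a=\sum_{j=1}^r t_j e_j$, exploiting the explicit description $\mathcal{H}=\text{Ad}_{\mathfrak{p}^+\otimes\mathfrak{p}^-}\mathcal{D}\bar{D}$ together with the $G$-invariance (\ref{inv-h}). Since $F$ is $K$-invariant, both its value and the output $\mathcal{H}F$ are determined by their restriction to the diagonal; moreover the $\mathfrak{k}_\mathbb{C}$-valued quantity $\mathcal{H}F(a)$ must be equivariant, so that at $a$ it is forced into the span of the commuting elements $D(e_j,\bar{e}_j)=[e_j,\bar{e}_j]\in\mathfrak{k}_\mathbb{C}$. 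This already accounts for the shape (\ref{Hua_radial}); the real content is to identify the scalar coefficients $\mathcal{H}_jF$.

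For the computation itself I would proceed exactly as in the proof of Theorem \ref{theorem-Hua-Poisson}: write $\bar{D}F=B(z,\bar{z})\bar{\partial}F$, apply the holomorphic covariant derivative $\mathcal{D}$, and contract through the adjoint action, using the basic identity $\partial(z^{\bar{z}})=B(z,\bar{z})^{-1}\,Id$ recorded there. The covariant derivative $\mathcal{D}$ on the line bundle $E_\nu$ carries, besides the metric connection piece, a term coming from the character $\tau_\nu$, namely the logarithmic derivative of $h(z,\bar{z})^{-\nu}$; this twist is precisely the source of the $\nu$-dependent first-order terms $\frac{1}{t_j}(2\nu)\frac{\partial}{\partial t_j}$ inside the $(1-t_j^2)^2$ block and the separate term $(-2\nu)(1-t_j^2)\frac{1}{t_j}\frac{\partial}{\partial t_j}$, while the $\nu$-independent $\frac{1}{t_j}\frac{\partial}{\partial t_j}$ is just the planar-Laplacian contribution $\partial_r^2+\frac1r\partial_r$ from the one complex dimension of the $e_j$-direction.

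The key structural input is the Peirce decomposition $V=\bigoplus_{1\le i\le j\le r}V_{ij}$ adapted to the frame. On the diagonal the Bergman operator $B(a,\bar{a})$ acts as the scalar $(1-t_i^2)(1-t_j^2)$ on $V_{ij}$ (hence $(1-t_j^2)^2$ on $V_{jj}=\mathbb{C}e_j$), which produces the prefactors $(1-t_j^2)^2$ and $(1-t_j^2)(1-t_k^2)$. Choosing the basis $\{e_\alpha\}$ of $\mathfrak{p}^+$ compatible with this decomposition, the pure diagonal directions $e_j$ contribute the second-order radial part $\frac{\partial^2}{\partial t_j^2}$, whereas the off-diagonal spaces $V_{jk}$ ($j\ne k$) contribute only first-order terms: since $XF=0$ for $X\in\mathfrak{k}$, differentiating $F(\exp(sX)a)=F(a)$ twice expresses the Hessian of $F$ along an orbit direction $Xa\in V_{jk}$ in terms of the radial first derivatives of $F$, and the eigenvalues of the relevant $\text{ad}$-action on $V_{jk}$ supply the denominators $t_j-t_k$ and $t_j+t_k$. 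Summing over the $\dim V_{jk}=a$ basis vectors of each off-diagonal space yields the multiplicity factor $\frac{a}{2}$.

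Finally I would assemble the $\mathfrak{k}_\mathbb{C}$-valued output via the commutators $[e_\alpha,\bar{e}_\beta]=D(e_\alpha,\bar{e}_\beta)$, checking that the cross terms cancel and that the coefficient of each $D(e_j,\bar{e}_j)$ is exactly $\mathcal{H}_jF$. The main obstacle is the off-diagonal step: one must carefully track the action of $\mathfrak{k}$ on the Peirce spaces $V_{jk}$, the quadratic correction $X^2a$ appearing in the second variation of $F(\exp(sX)a)$, and the interaction of these with the scalar eigenvalues of $B(a,\bar{a})$, so that the symmetric and antisymmetric combinations $\frac{1}{t_j\pm t_k}(\frac{\partial}{\partial t_j}\pm\frac{\partial}{\partial t_k})$ emerge with the correct coefficients; keeping the $\tau_\nu$-twist consistent through all of this bookkeeping is the delicate part.
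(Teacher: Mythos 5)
Your proposal is correct and follows essentially the same route as the paper: the paper likewise computes $\mathrm{Ad}_{\mathfrak{p}^+\otimes\mathfrak{p}^-}\circ\mathcal{D}\circ\bar D$ at a diagonal point, splits off by Leibniz the term where the holomorphic derivative hits the $h^\nu$-twist (giving $-\frac{\nu}{2}\sum_j t_j(1-t_j^2)\,\partial_{t_j}F\,D(e_j,\bar e_j)$, which is then rewritten as the $(1+2\nu)\frac{1}{t_j}$ and $(-2\nu)(1-t_j^2)\frac{1}{t_j}$ corrections), and evaluates it using $z^{\bar z}=\sum_j\frac{t_j}{1-t_j^2}e_j$ and the Peirce diagonalization of $B(z,\bar z)$, exactly as you outline. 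The only notable difference is that for the $\nu$-independent second-order part you propose to redo the radial-part computation via the second variation of $K$-invariance, whereas the paper simply cites the known computation (Koufany--Zhang, Faraut--Kor\'anyi) for that term.
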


 \begin{proof}
Recall \cite{gz-shimura} that 
the operator $\mathcal D$ acting on
 $T^{(1, 0)}$-valued
function takes the form
$$
\mathcal{D}F(z)=h^{-\nu}(z, \bar{z}) 
B(z, \bar{z})\sum_k\partial_k (h^{\nu}(z, \bar{z}) B(z, \bar{z})^{-1}F(z))
\otimes  \bar v_k
$$
which is a  $\mathfrak{p}^+\otimes \mathfrak{p}^-$-valued
function.
Thus
\begin{equation*}
\mathcal{H} 
F(z)
=\Ad_{\mathfrak{p}^+\otimes \mathfrak{p}^-} \sum_{k, l}
h^{-\nu}(z, \bar{z}) 
B(z, \bar{z})\sum_k\partial_k
 (h^{\nu}(z, \bar{z}) 
(\bar \partial_{l} F(z))v_l
)
\otimes  \bar v_k,
\end{equation*}
where we have performed the cancellation of the $B^{-1}$-term in
$\mathcal D$ with $B$ in $\bar D$.
Performing the Leibniz rule
for $\partial_k$ we get the above as sum of two terms, say $I$ and $II$
where
$$
II=\Ad_{\mathfrak{p}^+\otimes \mathfrak{p}^-} \sum_{k, l}
B(z, \bar{z})\sum_k\partial_k
 ( (\bar \partial_{l} F(z)) v_l
)
\otimes  \bar v_k  
$$
 and 
$$
I=\Ad_{\fp\otimes \bar \fp} \sum_{k, l}
h^{-\nu}(z, z) 
B(z, \bar{z})\sum_k (\partial_k h^{\nu}(z, \bar{z}) )
 ( (\bar \partial_{l} F(z))
 v_l
)
\otimes  \bar v_k  
$$
The second order term $II$ is computed as in \cite{Koufany-Zhang} or \cite{Faraut-Koranyi}
and we need only to treat $I$.
Recall that
$$
\partial_{v} h^{\nu}(z, \bar{z})=
\partial_{v} e^{\nu\log h(z, \bar{z})}
=-\nu e^{\nu\log h(z, z)} \partial_{v} (-\log h(z, \bar{z}))
=-\nu
 h^{\nu} (z, \bar{z}) (v, z^{\bar z})
$$
where
$z^{\bar z}=\bar \partial (-\log h(z, \bar{z}))$
is  quasi-inverse of $z$ with respect to $\bar z$;
see \cite{Loos}, \cite{Zhang}. We have then
$$I=-\nu
\sum_{k, l}(v_k, z^{\bar z})
 (\bar \partial_{l} F(z))
[B(z, \bar{z}) v_l,  \bar v_k].
$$
For $z=\sum_{j=1}^r t_j e_j
$ we have
$z^{\bar z}
=\sum_{j=1}^r \frac{t_j}{1-t_j^2} e_j$, and
$$B(z, \bar{z})=\sum_{j=1}^r (1-t_j^2)^2
 D(e_j, \bar{e}_j)$$
which is diagonalized under the Peirce
decomposition of $V$. In particular the above
sum reduces to
$$I=-\nu
\sum_{j=1}^r  \frac{t_j}{1-t_j^2}
 (\bar \partial_{e_j} F(z))
(1-t_j^2)^2 D(e_j, \bar{e}_j).
$$
Being a $K$-invariant function $F(z)$ is
rotation invariant on the plan $\mathbb Ce_j$
and we have then
 $\bar \partial_{e_j} F(z) =\frac 12 \partial_{t_j}F(t)$.
Finally we have
$$I=-\frac{\nu}2
\sum_{j=1}^r  {t_j}(1-t_j^2)
 \partial_{t_j}F(t)
 D(e_j, \bar{e}_j).
$$
To put $I+II$  in a better form we write $t_j(1-t_j^2)=
(1-t_j^2)(\frac{1-t_j^2}{t_j} -\frac{(1-t_j^2)^2}{t_j })$
and we get then the form $\mathcal{H}_j$ as claimed.

\end{proof}

Note that this formula is consistent
with the formula for Laplace-Beltrami operator
on line bundle with parameter $\nu$ where 
the root multiplicity $1$ for the root $\ga_j$
is replaced by $1+2\nu$ and the multiplicity $2b$ 
of  $\frac {\ga_j}2$ by $2b-2\nu$;
see e.g. \cite{Shimeno-jfa}.\\

We prove now  the sufficiency 
of the Hua equation 
(\ref{Hua-system})
in Theorem 5.2.  Let $\varphi_{\lambda_s,\nu}$ the unique elementary spherical function of type $\tau_\nu$ in $E_{\nu}$,
$$\varphi_{\lambda_s,\nu}(g)
=
\int_K e^{(\lambda_s+\rho)H}\tau_\nu(k^{-1}
\kappa(g^{-1}k))dk,\;\; g\in G$$
With some slightly abuse of notation
we denote $\varphi_{\lambda_s,\nu}(z)$ the corresponding
$K$-invariant function on $\Omega$ via our
trivialization.

Suppose $F$ be a $K$-invariant analytic eigenfunction of the Hua equation 
(\ref{Hua-system}). Let $g\in G$. Then the function
$$\Phi_g(z)=\int_K F(gkz)dk
$$
is $K$-invariant solution of the differential system
$$\mathcal{H}_j\Phi=2\frac{n}{r}s(\frac{n}{r}(s-1)+\nu)\Phi,\qquad j=1,\ldots, r.$$
Therefore by a result  of Yan \cite{Yan}, $\Phi_g$ is proportional to $\varphi_{\lambda_s,\nu}$ and thus,
$$\int_K F(gkz)dk=
\varphi_{\lambda_s,\nu}(z) F(g\cdot 0).
$$
By Shimeno  \cite[Theorem 3.2]{Shimeno-jfa} 
we  see that this integral formula characterizes the joint eigenfunctions in $E_\lambda$, that is $F\in \mathcal{A}(G/K,\mathcal{M}_{\lambda_s,\nu})$.
Now applying Theorem \ref{Shimeno-Thm} we can find 
 $f\in \mathcal{B}(G/P,L_{\lambda_s,\nu})$  
such that $F=\mathcal{P}_{\lambda_s,\nu}(f)$.  
The rest
of the proof
is the same as in
 \cite{Berline-Vergne, Koufany-Zhang}, and we 
get,
under the condition (\ref{Shimeno-condition}), that
 $f$ is actually a function on $S$, i.e.,  $f\in\mathcal{B}(S,s,\nu)$.
  We shall not repeat the computations.

 
\section{Type one domains}
Let $V=M_{r,r+b}(\mathbb{C})$ be the vector space of complex
$r\times(r+b)-$matrices. $V$ is a Jordan triple system for the following triple product
 $\{x\bar{y}z\}=xy^*z+zy^*x$,
and  the endomorphisms $D(z,\bar{v})$ are given by
\begin{equation*}
D(z,\bar{v})w=\{z\bar{v}w\}=zv^*w+wv^*z.
\end{equation*}
 
Let
\begin{equation*}
\mathbf{I}_{r,r+b}=\{z\in M_{r,r+b}(\mathbb{C}) \; : \; I_r-z^* z\gg 0\}
\end{equation*}
where  $I_r$ denote the unit matrix of rank $r$. Then
$\mathbf{I}_{r,r+b}$ is a bounded symmetric domain of dimension $r(r+b)$, rank $r$ and
genus $2r+b$. The multiplicities are $2b$ and $a=2$ if $2\leq r$, $a=0$
if $r=1$. The domain $\mathbf{I}_{r,r+b}$ is of tube type if and only
if $b=0$. Its Shilov boundary is 
\begin{equation*}
S_{r,r+b}=\{z\in M_{r,r+b}(\mathbb{C}) \; :\; z^* z=I_r\}.
\end{equation*}

As a homogeneous space, the bounded domain $\mathbf{I}_{r,r+b}$ can be identified with $SU(r,r+b)/S(U(r)\times U(r+b))$.  
 
The complex Lie algebra $\mathfrak{k}_\mathbb{C}$ is given by the set of all matrices
$$\left(\begin{matrix} \mathbf{a}& 0\\ 0&\mathbf{d}\end{matrix}\right),\;
\mathbf{a}\in M_{r,r}(\mathbb{C}),\; \mathbf{d}\in M_{r+b,r+b}(\mathbb{C}), \;
\tr(\mathbf{a})+\tr(\mathbf{d})=0.$$
Hence, $\mathfrak{k}_\mathbb{C}$ can be written as the sum 
\begin{equation*} 
\mathfrak{k}_\mathbb{C}=\mathfrak{k}_\mathbb{C}^{(1)}
\oplus\mathfrak{k}_\mathbb{C}^{(2)},
\end{equation*}
where $\mathfrak{k}_\mathbb{C}^{(1)}$ and $\mathfrak{k}_\mathbb{C}^{(2)}$ are the following
ideals 
$$\mathfrak{k}_\mathbb{C}^{(1)}=\{\left(\begin{matrix} \mathbf{a}& 0\\
0&-\frac{\tr(\mathbf{a})}{r+b}I_{r+b}\end{matrix}\right),\;
\mathbf{a}\in M_{r,r}(\mathbb{C})\},$$
$$\mathfrak{k}_\mathbb{C}^{(2)}=\{\left(\begin{matrix} 0& 0\\ 0&\mathbf{d}\end{matrix}\right),\;
\mathbf{d}\in M_{r+b,r+b}(\mathbb{C}),\;\tr(\mathbf{d})=0\}.$$
Let $\mathcal{H}^{(1)}$ be the projection of the Hua operator $\mathcal{H}$ on $\mathfrak{k}_\mathbb{C}^{(1)}$. It maps $C^\infty(G/K,\tau_\nu)$ into $C^\infty(G/K,\tau_\nu\otimes \mathfrak{k}_\mathbb{C}^{(1)})$.\\

The main result of this section is the following   theorem which is the analogue of Theorem \ref{theorem-tube-domains}  for non tube domains of type one.
\begin{theorem}\label{theorem-type-one-domains}
Let $\Omega=\mathbf{I}_{r,r+b}$ be a bounded symmetric domain of type one. Suppose $s\in \mathbb{C}$ satisfies the condition (\ref{Shimeno-condition}). Then the Poisson transform $\mathcal{P}_{\lambda_s,\nu}$ is a $SU(r,r+b)$-isomorphism of $\mathcal{B}(S_{r,r+b};s,\nu)$ onto the space of analytic functions $F$ on $\mathbf{I}_{r,r+b}$ such that
\begin{equation}\label{Hua-condition-type-one}
\mathcal{H}^{(1)}F=(r+b)s((r+b)(s-1)+\nu) F \otimes I_r .
\end{equation}
\end{theorem}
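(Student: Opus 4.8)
The plan is to run the same two-step scheme as in Theorem \ref{theorem-tube-domains}, the only new ingredient being the $\Ad(K_{\mathbb{C}})$-equivariant projection $\mathrm{pr}_1\colon\mathfrak{k}_{\mathbb{C}}\to\mathfrak{k}_{\mathbb{C}}^{(1)}$, $(\mathbf{a},\mathbf{d})\mapsto(\mathbf{a},-\frac{\tr\mathbf{a}}{r+b}I_{r+b})$, so that $\mathcal{H}^{(1)}=\mathrm{pr}_1\circ\mathcal{H}$. First I would prove necessity: that $z\mapsto P_{s,\nu}(z,\bar u)$, and hence every function in the range of $\mathcal{P}_{\lambda_s,\nu}$, solves (\ref{Hua-condition-type-one}). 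Then I would prove sufficiency: that every analytic solution of (\ref{Hua-condition-type-one}) is the Poisson integral of a hyperfunction section on $S_{r,r+b}$.

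For necessity, the computation at $z=0$ in the proof of Theorem \ref{theorem-Hua-Poisson} is valid verbatim on a non-tube domain up to its very last substitution; it gives
$$\mathcal{H}P_{s,\nu}(0,\bar u)=\tfrac nr s\bigl(\nu+\tfrac nr s\bigr)D(u,\bar u)-\tfrac nr s\sum_{\alpha=1}^n D(e_\alpha,\bar e_\alpha),$$
the identities $D(u,\bar u)=2I$ and $\sum_\alpha D(e_\alpha,\bar e_\alpha)=2\tfrac nr I$ being the only tube-type facts used afterwards. I then apply $\mathrm{pr}_1$. Writing each $D(z,\bar w)\in\mathfrak{k}_{\mathbb{C}}$ as the pair $(zw^*,-w^*z)$, for $u\in S_{r,r+b}$ one has $uu^*=I_r$, whence $\mathrm{pr}_1 D(u,\bar u)=I_r$; and for an orthonormal basis $\{e_\alpha\}$ of $V$ one computes $\sum_\alpha e_\alpha e_\alpha^*=\tfrac nr I_r$, whence $\mathrm{pr}_1\sum_\alpha D(e_\alpha,\bar e_\alpha)=\tfrac nr I_r$. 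Substituting and using $\tfrac nr=r+b$ yields exactly
$$\mathcal{H}^{(1)}P_{s,\nu}(0,\bar u)=(r+b)s\bigl((r+b)(s-1)+\nu\bigr)I_r .$$
The factor $2$ of the tube case disappears here precisely because $\mathrm{pr}_1$ discards the ``large block'' contribution to both $D(u,\bar u)$ and $\sum_\alpha D(e_\alpha,\bar e_\alpha)$. Since $\mathfrak{k}_{\mathbb{C}}^{(1)}$ is an ideal, $\mathrm{pr}_1$ commutes with $\Ad(K_{\mathbb{C}})$, so $\mathcal{H}^{(1)}$ inherits the covariance (\ref{inv-h}); as $I_r$ is central in $\mathfrak{k}_{\mathbb{C}}$ and hence $\Ad(K_{\mathbb{C}})$-fixed, the equation propagates from $z=0$ to all $z$ exactly as in the proof of Theorem \ref{theorem-Hua-Poisson}, and then to the whole range of $\mathcal{P}_{\lambda_s,\nu}$.

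For sufficiency I would follow \S7: reduce to $K$-invariant $F$ by averaging $\Phi_g(z)=\int_K F(gkz)\,dk$, and compute the radial part of $\mathcal{H}^{(1)}=\mathrm{pr}_1\circ\mathcal{H}$ on $a=\sum_j t_je_j$. The frame vectors give $\mathrm{pr}_1 D(e_j,\bar e_j)$ equal to the diagonal matrix units of $\mathfrak{gl}_r=\mathfrak{k}_{\mathbb{C}}^{(1)}$ (plus the fixed central correction), which are linearly independent and sum to $I_r$, so an analogue of Theorem \ref{radial-part} yields
$$4\mathcal{H}^{(1)}F(a)=\sum_{j=1}^r\mathcal{H}_jF(t)\,\mathrm{pr}_1 D(e_j,\bar e_j),$$
with $\mathcal{H}_j$ the radial operators of Theorem \ref{radial-part} now carrying the type-one non-tube multiplicities ($a=2$ and $\frac12\beta_j$-multiplicity $2b$, $\nu$-twisted as in the remark after that theorem). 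Matching coefficients of the independent $\mathrm{pr}_1 D(e_j,\bar e_j)$ extracts the $r$ scalar equations $\mathcal{H}_jF=4(r+b)s((r+b)(s-1)+\nu)F$; Yan's theorem \cite{Yan} then forces $F$ to be a multiple of $\varphi_{\lambda_s,\nu}$, Shimeno's integral characterization \cite[Theorem 3.2]{Shimeno-jfa} gives $F\in\mathcal{A}(G/K,\mathcal{M}_{\lambda_s,\nu})$, Theorem \ref{Shimeno-Thm} produces $f\in\mathcal{B}(G/P,L_{\lambda_s,\nu})$ with $F=\mathcal{P}_{\lambda_s,\nu}(f)$, and the boundary-value argument of \cite{Berline-Vergne,Koufany-Zhang}, valid under (\ref{Shimeno-condition}), places $f$ in $\mathcal{B}(S_{r,r+b};s,\nu)$.

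I expect the radial-part step to be the main obstacle. For a non-tube domain the full Hua operator $\mathcal{H}$ acquires extra components from the Peirce spaces $V_{0j}$, and the crux is to verify that projecting onto $\mathfrak{k}_{\mathbb{C}}^{(1)}\cong\mathfrak{gl}_r$ retains exactly the $r$ independent diagonal equations, with the correct multiplicities, reproducing the spherical eigensystem for $\lambda_s$ --- this is precisely why the Berline--Vergne projection onto the small factor is the right object, the complementary projection onto $\mathfrak{k}_{\mathbb{C}}^{(2)}=\mathfrak{sl}_{r+b}$ being degenerate. Checking that the off-diagonal and $\mathfrak{k}_{\mathbb{C}}^{(2)}$-valued contributions vanish on $K$-invariant functions, and that Yan's theorem applies to the resulting $\nu$-twisted hypergeometric system, are the points where the argument genuinely departs from the tube computation of \S7.
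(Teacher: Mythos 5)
Your proposal follows essentially the same route as the paper: necessity by running the $z=0$ computation of Theorem \ref{theorem-Hua-Poisson} on the non-tube domain and then applying the projection $\mathrm{pr}_1$ onto $\mathfrak{k}_\mathbb{C}^{(1)}$ (this is the paper's Proposition for type one domains), and sufficiency via the radial part of $\mathcal{H}^{(1)}$ with the twisted multiplicities $(2, 2b-2\nu)$ followed by the same Yan--Shimeno--boundary-value chain as in \S 7. Your explicit verifications (that $uu^*=I_r$ on $S_{r,r+b}$, that $\sum_\alpha e_\alpha e_\alpha^*=\frac{n}{r}I_r=(r+b)I_r$, and that $\mathrm{pr}_1$ commutes with $\operatorname{Ad}(K_\mathbb{C})$ so the equation propagates from $z=0$) are precisely the details the paper leaves implicit, and they are correct.
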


The bounded domain $\mathbf{I}_{r,r+b}$ is of non tube type, however the characterization of Poisson integrals involves a Hua operator  of the second order. As a consequence, the proof of Theorem \ref{theorem-type-one-domains} is similar to Theorem \ref{theorem-tube-domains} and \cite[Theorem 6.1]{Koufany-Zhang}. \\
The necessarily condition is guaranteed by the following 

\begin{proposition} For any $u\in S_{r,r+b}$ the function $z\mapsto P_{s,\nu}(z,\bar{u})$ satisfies
$$\mathcal{H}P_{s,\nu}(z,\bar{u})=(r+b)s[(r+b)(s-1)+\nu]P_{s,\nu}(z,\bar{u}) I_r.$$
\end{proposition}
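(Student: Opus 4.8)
The plan is to imitate the proof of Theorem \ref{theorem-Hua-Poisson} almost verbatim, the only genuinely new input being the evaluation of two distinguished elements of $\mathfrak{k}_\mathbb{C}$ for the type one triple $V=M_{r,r+b}(\mathbb{C})$, followed by the projection onto the ideal $\mathfrak{k}_\mathbb{C}^{(1)}$. First I would reduce to the base point $z=0$. Since $\mathfrak{k}_\mathbb{C}^{(1)}$ is an $\mathrm{Ad}(K_\mathbb{C})$-stable ideal, the projected operator $\mathcal{H}^{(1)}$ is again $G$-equivariant in the sense of (\ref{inv-h}); moreover the target element $I_r\in\mathfrak{k}_\mathbb{C}^{(1)}$, namely $\diag(I_r,-\tfrac{r}{r+b}I_{r+b})$, is central in $\mathfrak{k}_\mathbb{C}=\mathfrak{k}_\mathbb{C}^{(1)}\oplus\mathfrak{k}_\mathbb{C}^{(2)}$ and hence $\mathrm{Ad}(K_\mathbb{C})$-invariant. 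Combined with the cocycle identity for $P_{s,\nu}$ recalled in the proof of Theorem \ref{theorem-Hua-Poisson}, this shows that it suffices to verify the asserted identity at $z=0$, for every $u\in S_{r,r+b}$.

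At $z=0$ the differential computation is identical to that of Theorem \ref{theorem-Hua-Poisson}: writing $\tfrac{n}{r}=r+b$, the same two applications of $\partial$ and $\bar\partial$, together with the evaluations $z^{\bar z}|_{0}=0$, $\bar z^{z}|_0=0$, $u^{\bar z}|_0=u$, $\bar u^z|_0=\bar u$ and $\partial(z^{\bar z})|_0=Id$, give
\[
\mathcal{H}P_{s,\nu}(0,\bar u)=(r+b)s\bigl(\nu+(r+b)s\bigr)D(u,\bar u)\,P_{s,\nu}(0,\bar u)-(r+b)s\Bigl(\sum_{\alpha}D(e_\alpha,\bar e_\alpha)\Bigr)P_{s,\nu}(0,\bar u),
\]
as an element of $\mathfrak{k}_\mathbb{C}$. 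Thus I only have to evaluate the two operators $D(u,\bar u)$ and $\sum_\alpha D(e_\alpha,\bar e_\alpha)$ in this realization and read off their $\mathfrak{k}_\mathbb{C}^{(1)}$-parts.

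Here the type one structure enters. Using $D(z,\bar v)w=zv^{*}w+wv^{*}z$ and the identification of $\begin{pmatrix}\mathbf a&0\\0&\mathbf d\end{pmatrix}\in\mathfrak{k}_\mathbb{C}$ with the operator $w\mapsto \mathbf a w-w\mathbf d$ on $V$, one reads off that $D(u,\bar u)$ is the block-diagonal element $\diag(uu^{*},-u^{*}u)$ and, choosing the matrix units as an orthonormal basis for the inner product $\langle X,Y\rangle=\tr(Y^{*}X)$, that $\sum_\alpha D(e_\alpha,\bar e_\alpha)=\diag((r+b)I_r,-rI_{r+b})$. The decisive point is the Shilov boundary relation $uu^{*}=I_r$ for $u\in S_{r,r+b}$: it makes the $\mathfrak{gl}_r$-block of $D(u,\bar u)$ equal to $I_r$, so the $\mathfrak{gl}_r$-component of the displayed expression collapses to the scalar matrix
\[
\bigl[(r+b)s(\nu+(r+b)s)-(r+b)s(r+b)\bigr]I_r=(r+b)s\bigl[(r+b)(s-1)+\nu\bigr]I_r.
\]
Projecting onto $\mathfrak{k}_\mathbb{C}^{(1)}$ (whose image is determined by this $\mathfrak{gl}_r$-block) then yields exactly $(r+b)s[(r+b)(s-1)+\nu]P_{s,\nu}(0,\bar u)\,I_r$, and the $G$-equivariance reduction above propagates this to all $z$, giving the necessary condition needed for Theorem \ref{theorem-type-one-domains}.

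The step I expect to require the most care is the last one. Unlike the tube case of Theorem \ref{theorem-Hua-Poisson}, where $D(u,\bar u)=2I$ and $\sum_\alpha D(e_\alpha,\bar e_\alpha)$ are both scalar so that $\mathcal{H}P_{s,\nu}$ is itself a multiple of $Id$, here the full value has a nonzero $\mathfrak{gl}_{r+b}$-block and is not central. It is only after projecting onto the first ideal $\mathfrak{k}_\mathbb{C}^{(1)}$—and using $uu^{*}=I_r$—that one obtains a clean scalar multiple of $I_r$; this is precisely why the non-tube statement is phrased with $\mathcal{H}^{(1)}$ rather than with $\mathcal{H}$, and the careful bookkeeping of the three Peirce blocks of sizes $r$, $r$ and $b$ is where an error would most easily creep in.
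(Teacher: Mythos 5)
Your proof is correct, and it reaches the Proposition by a slightly different organization of the same ingredients the paper uses. The paper's own proof is a two-line sketch: it asserts a formula for $\mathcal{H}^{(1)}P_{s,\nu}(z,\bar u)$ valid at \emph{every} point $z$ of an arbitrary (not necessarily tube) bounded symmetric domain, obtained by rerunning the computations of Theorem \ref{theorem-Hua-Poisson} without evaluating at the origin (along the lines of the earlier Koufany--Zhang paper), and then specializes that global formula to $\mathbf{I}_{r,r+b}$; no equivariance argument is needed there. You instead verify the identity only at $z=0$ --- your evaluations $D(u,\bar u)=\diag(uu^{*},-u^{*}u)$, $\sum_\alpha D(e_\alpha,\bar e_\alpha)=\diag((r+b)I_r,-rI_{r+b})$, and the use of the Shilov relation $uu^{*}=I_r$ are exactly right --- and then propagate by invariance, which is precisely the reduction scheme of the paper's tube-case proof of Theorem \ref{theorem-Hua-Poisson}. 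For this you correctly supply the two facts the propagation needs and which the paper never states: the projection onto the ideal $\mathfrak{k}_\mathbb{C}^{(1)}$ commutes with $\operatorname{Ad}(K_\mathbb{C})$, so that $\mathcal{H}^{(1)}$ inherits the invariance (\ref{inv-h}), and the target element $\diag(I_r,-\tfrac{r}{r+b}I_{r+b})$ is central in $\mathfrak{k}_\mathbb{C}$, hence $\operatorname{Ad}$-fixed. Your closing remark is also a genuine clarification rather than pedantry: for $b>0$ the displayed equation is false for the full operator $\mathcal{H}$, since the $\mathfrak{gl}_{r+b}$-block of $\mathcal{H}P_{s,\nu}(0,\bar u)$ contains the non-scalar rank-$r$ projection $u^{*}u$; the Proposition must therefore be read, as in the paper's own displayed proof formula and in Theorem \ref{theorem-type-one-domains}, as a statement about the projected operator $\mathcal{H}^{(1)}$, which is exactly what your argument proves.
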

Indeed, using the computations of the proof of Theorem \ref{theorem-Hua-Poisson} and some arguments in the proof of  \cite[Theorem 5.3]{Koufany-Zhang} we can extend the formula (\ref{formula-Hua-Poisson}) to any (not necessarily tube) bounded symmetric domain:
$$\begin{array}{rl}
\mathcal{H}^{1}P_{s,\nu}(z,\bar{u})=&\bigl[ (\frac{n}{r}s)^2 D(B(z,\bar{z})(\bar{z}^z-\bar{u}^z),z^{\bar{z}}-u^{\bar{z}}) -(\frac{n}{r}s)pZ_0\\
                                                  -&(\frac{n}{r}s)\nu D(B(z,\bar{z})(\bar{u}^z-\bar{u}^z),z^{\bar{z}}-u^{\bar{z}})   \bigr] P_{s,\nu}(z,\bar{u}) I_r .
\end{array}$$

Specifying this formula to the domain $\mathbf{I}_{r,r+b}$ we get proposition.\\

 On the other hand, to prove the   sufficiency condition, the key point is to compute the radial part of the Hua operator $\mathcal{H}^{(1)}$. This follows immediately from the proof of Theorem \ref{radial-part}.
 \begin{theorem} Let $f$ be 
$\mathcal{C}^2$ and  $K-$invariant function, then for $a=\sum_{j=1}^r t_je_j$,
\begin{equation}\label{Hua_radial}
4\mathcal{H}^{(1)}f(a)=\sum_{j=1}^r\mathcal{H}_jF(t_1,\ldots,t_r)
D(e_j,\bar{e}_j)^{(1)},
\end{equation}
where the scalar-valued operators $\mathcal{H}_j$ are given by
$$
\mathcal{H}_j\!\!=\!\!
(1-t_j^2)^2\bigl(\frac{\partial^2}{\partial
  t_j^2}+\frac{1}{t_j}(1+ 2\nu )
\frac{\partial}{\partial t_j}\bigr)+$$
$$+ \sum_{k\not=j}(1-t_j^2)(1-t_k^2)\Bigl[\frac{1}{t_j-t_k}\bigl(\frac{\partial
}{\partial t_j}-\frac{\partial }{\partial t_k}\bigr)
+\frac{1}{t_j+t_k}\bigl(\frac{\partial
}{\partial t_j}+\frac{\partial }{\partial t_k}\bigr)\Bigr]+$$
$$
+(2b-2\nu)(1-t_j^2)\frac{1}{t_j}\frac{\partial
  }{\partial t_j}$$
and $D(e_j,\bar{e}_j)^{(1)}$
is the $\mathfrak{k}_\mathbb{C}^{(1)}$ component
of $D(e_j,\bar{e}_j) \in
\mathfrak{k}_\mathbb{C}$.
\end{theorem}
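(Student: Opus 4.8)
The plan is to rerun the computation of Theorem \ref{radial-part} essentially verbatim, the only genuinely new input being the contribution of the Peirce spaces $V_{0j}$, which are absent in the tube case ($b=0$). As there, I would begin from the factorization $\mathcal{H}F=\Ad_{\mathfrak{p}^+\otimes\mathfrak{p}^-}\mathcal{D}\bar D F$, cancel the $B^{-1}$ appearing inside $\mathcal{D}$ against the $B$ coming from $\bar D$, and apply the Leibniz rule to the holomorphic derivative $\partial_k$ so as to split $\mathcal{H}F=I+II$ into the first order term $I$ carrying the factor $\partial_k h^\nu$ and the purely second order term $II$. Both are then evaluated on the diagonal $a=\sum_j t_j e_j$, and the resulting $\mathfrak{k}_\mathbb{C}$-valued expression is projected onto $\mathfrak{k}_\mathbb{C}^{(1)}$.

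For the first order term $I$ nothing changes from the tube case. Since the quasi-inverse $z^{\bar z}=\sum_j\frac{t_j}{1-t_j^2}e_j$ lies along the frame, the pairing $(v_k,z^{\bar z})$ selects only the diagonal tripotents $v_k=e_j$; moreover, for a $K$-invariant $F$ every first derivative $\bar\partial_v F$ vanishes at $a$ in the off-diagonal directions, in particular along $V_{0j}$, so that one is left with $v_l=e_m$ and then $[B(z,\bar z)e_m,\bar e_j]=(1-t_m^2)^2 D(e_m,\bar e_j)$ forces $m=j$. Hence $I$ collapses, exactly as before, to $4I=-2\nu\sum_j t_j(1-t_j^2)\partial_{t_j}F\cdot D(e_j,\bar e_j)$, carried entirely by the diagonal elements $D(e_j,\bar e_j)$.

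The genuinely new computation is the second order term $II$, where the sum over a basis $\{v_l\}$ of $\mathfrak{p}^+$ now runs also over $V_{0j}$. On $V_{0j}$ the Bergman operator acts by the scalar $(1-t_j^2)$ rather than $(1-t_j^2)^2$, and although the first derivatives vanish the Hessian $\partial_v\bar\partial_v F$ of a radial $F$ along the $b$ complex directions of $V_{0j}$ reduces to a multiple of $\frac{1}{t_j}\partial_{t_j}F$; summing over these directions produces the coefficient $2b(1-t_j^2)\frac{1}{t_j}\partial_{t_j}$, the factor $2b$ being precisely the multiplicity of the short root $\frac12\beta_j$. The blocks $V_{jk}$ contribute the cross terms and $\mathbb{C}e_j$ contributes $(1-t_j^2)^2(\partial_{t_j}^2+\frac{1}{t_j}\partial_{t_j})$, exactly as in the untwisted radial Laplacian. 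The one point requiring care is that the element $\sum_{v\in V_{0j}}D(v,\bar v)$ produced here is \emph{not} a multiple of $D(e_j,\bar e_j)$ in $\mathfrak{k}_\mathbb{C}$: it splits into a $\mathfrak{k}_\mathbb{C}^{(1)}$-part proportional to $D(e_j,\bar e_j)^{(1)}$ and a $\mathfrak{k}_\mathbb{C}^{(2)}$-part, and it is precisely the projection onto $\mathfrak{k}_\mathbb{C}^{(1)}\cong\mathfrak{gl}_r$ that discards the latter and restores the clean diagonal form.

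To finish, I would combine $I$ and $II$ and rewrite $t_j(1-t_j^2)=\frac{1-t_j^2}{t_j}-\frac{(1-t_j^2)^2}{t_j}$ exactly as in the proof of Theorem \ref{radial-part}; this redistributes the $\nu$ contribution of $I$ into the coefficient $(1+2\nu)$ of the $\mathbb{C}e_j$ term and a residual summand $-2\nu(1-t_j^2)\frac{1}{t_j}\partial_{t_j}$, which merges with the $2b$ from $II$ into the single coefficient $2b-2\nu$. Since each $\mathcal{H}_j$ is a scalar differential operator in $t$, it commutes with the projection, which acts only on the $\mathfrak{k}_\mathbb{C}$-factor and sends $D(e_j,\bar e_j)$ to $D(e_j,\bar e_j)^{(1)}$; this yields the stated formula. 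I expect the main obstacle to be the bookkeeping of the $V_{0j}$ block in $II$---namely verifying that its Hessian contribution carries the multiplicity $2b$ and that its $\mathfrak{k}_\mathbb{C}^{(1)}$-projection is proportional to $D(e_j,\bar e_j)^{(1)}$---since everything else is inherited unchanged from the tube-type computation.
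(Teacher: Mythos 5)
Your proposal is correct and takes essentially the same route as the paper, whose entire proof of this statement is to invoke the tube-case computation of Theorem \ref{radial-part} (cancel $B^{-1}$ against $B$, split $\mathcal{H}=I+II$ by Leibniz, evaluate on the diagonal, then project to $\mathfrak{k}_\mathbb{C}^{(1)}$). You in fact supply the details the paper leaves implicit, and they all check out: $B(a,\bar a)$ acts by the scalar $(1-t_j^2)$ on $V_{0j}$, the Hessian of a radial $F$ along each complex direction of $V_{0j}$ is $\frac{1}{2t_j}\partial_{t_j}F$ (giving, after the factor $4$, the coefficient $2b$), and the $\mathfrak{k}_\mathbb{C}^{(1)}$-projection of $\sum_{v}D(v,\bar v)$ over a basis of $V_{0j}$ equals $b\,D(e_j,\bar e_j)^{(1)}$ even though the unprojected sum is not a multiple of $D(e_j,\bar e_j)$.
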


\section{Relative discrete series}

The Poisson transform is not injective 
for singular $s$ being 
 in the set $
 \Lambda_1\cup\Lambda_2$ in (\ref{Shimeno-condition}).
It arises thus
a question of understanding the image. A finite
subset of  such $s$ corresponds to the
 relative discrete series, i.e. the images
constitute  discrete components
in the decomposition of the $L^2$ space
for the bundle. In  this final section we find
the precise parameters and compute
explicitly the corresponding Poisson transforms
on some distinguished functions, thus
producing elements in the relative
discrete series.

Fix $\nu>p-1$ and $\nu \in p\mathbb Z$. Let $\alpha =\nu-p> -1$
and consider the weighted probability measure on $\Omega$
\begin{equation}
d\mu_\alpha (z) = c_\alpha h(z,\bar{z})^{\alpha}  dm(z)
\end{equation}

The group $G$ acts unitarily on  $L^2(\Omega,\mu_\alpha)$ via
(\ref{g-weigt-act}).

The irreducible decomposition of $L^2(\Omega,\mu_\alpha)$  under the $G$-action has been given by Shimeno in \cite[Theorem 5.10]{Shimeno-jfa} where he proved  abstractly that all discrete parts called relative discrete series appearing in the decomposition are holomorphic discrete series.
 In this section we need their explicit realization given by the second author in \cite{Zhang}.\\
 
 Let us introduce the conical functions, see \cite{Faraut-Koranyi}. Let $c_1,\ldots,c_r$ be the fixed Jordan frame. Put $e_j=c_1+\ldots+c_j$, for $j=1,\ldots, r$. Let $U_j=\{z\in V\; :\; D(c_j,c_j)z=z\}$. Then $U_j$ is a Jordan subalgebra of $V_1=V_1(e)$ with a determinant polynomial $\Delta_j$. We extend the principal minors $\Delta_j$ to all $V$ via $\Delta_j(z):=\Delta_j(P_{U_j}(z))$, where $P_{U_j}$ is the orthogonal projection onto $U_j$. Notice that $\Delta_r(z)=\Delta(z)=\det(z)$.\\
 
 For any $\underline{m}=(m_1,\ldots,m_r)\in\mathbb{C}^r$, consider the associated conical function
 $$\Delta_{\underline{m}}(z):=\Delta_1^{m_1-m_2}(z)\Delta_2^{m_2-m_3}(z)\cdots \Delta_r^{m_r}(z).$$
 If $z=\sum_{j=1}^rz_j c_j$ then $\Delta_{\underline{m}}(z)=\prod_{j=1}^rz_j^{m_j}$.\\

Denote
$$\ell=\begin{cases} 
  \frac{\alpha+1}{2}-1=\frac{\nu-p-1}{2}  & \mbox{if $\alpha$ is an odd integer} \\
  [\frac{\alpha+1}{2}]=[\frac{\nu-p+1}{2}]& \mbox{otherwise }
\end{cases}$$
here $[t]$ stands for the integer part of $t\in\mathbb{R}$. Define
\begin{equation}
D_\nu=\{\underline{m}=\sum_{j=1}^r m_j\gamma_j,\;\; 0\leq m_1\leq \cdots \leq m_r\leq \ell,\; m_j\in\mathbb{Z} \}.
\end{equation}
We let $A^{2,\alpha}_{\underline{m}}$ to be the subspace of $L^2(\Omega,\mu_\alpha)$ generated by the function $\bar{\Delta}_{\underline{m}}(q(z))$, for $\underline{m}\in D_\nu$, where
$$
q(z)=\bar{z}^z
$$ is
quasi-inverse of $\bar z$ with respect to $z$.

We reformulate \cite[Theorem 5.10]{Shimeno-jfa} and \cite[Theorem 4.7, remark 4.8]{Zhang} in the following.
\begin{theorem}
The relative discrete series representations
appearing in $L^2(\Omega,\mu_\alpha)$ are all holomorphic discrete series of the form $A^{2,\alpha}_{\underline{m}}$ with $\underline{m}\in D_\nu$.  The highest weight vector of $A^{2,\alpha}_{\underline{m}}$ is given by $\bar{\Delta}_{\underline{m}}(q(z))$.\\
\end{theorem}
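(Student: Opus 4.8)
The plan is to combine the abstract Plancherel-type decomposition of $L^2(\Omega,\mu_\alpha)$ obtained by Shimeno with the explicit realization of its discrete constituents due to the second author, so that the argument reduces to a matching of parameters. First I would invoke \cite[Theorem 5.10]{Shimeno-jfa}, which already asserts that every discrete summand of $L^2(\Omega,\mu_\alpha)$ under the weighted $G$-action (\ref{g-weigt-act}) is a holomorphic discrete series representation. This reduces the problem to identifying, among all holomorphic discrete series of $G$, exactly those that embed isometrically into $L^2(\Omega,\mu_\alpha)$, together with their highest weight vectors.

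Since a holomorphic discrete series is a highest weight module, it is generated from a unique (up to scalar) highest weight vector by the lowering operators, the vector itself being annihilated by the raising operators coming from $\mathfrak p^+$; under our trivialization it is a $K$-covariant function on $\Omega$. The candidate furnished by \cite[Theorem 4.7, Remark 4.8]{Zhang} is $\bar\Delta_{\underline m}(q(z))$ with $q(z)=\bar z^z$. I would first check that this function is genuinely such a vector: using the transformation behavior of the principal minors $\Delta_j$ under $K$ and the $K$-equivariance of the quasi-inverse $q$, one verifies that $\bar\Delta_{\underline m}(q(z))$ is a $K$-weight vector of the weight determined by $\underline m=\sum_j m_j\gamma_j$ and $\nu$, and that it is annihilated by $\mathfrak p^+$. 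The cyclic $G$-module it generates is then irreducible with highest weight labelled by $\underline m$ and, being a closed $G$-invariant subspace of $L^2$, is precisely the holomorphic discrete series attached to $\underline m$.

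The decisive analytic step is to determine for which $\underline m$ the vector $\bar\Delta_{\underline m}(q(z))$ actually lies in $L^2(\Omega,\mu_\alpha)$; this is where the set $D_\nu$ and the bound $\ell$ enter. I would compute the norm $\int_\Omega|\bar\Delta_{\underline m}(q(z))|^2 h(z,\bar z)^\alpha\,dm(z)$ by reducing it, via the polar decomposition and $K$-invariance, to a Gindikin-type integral over the symmetric cone and evaluating it through the Gamma-function formula for integrals of conical functions against the weight $h^\alpha$ (cf. \cite{Faraut-Koranyi}). Finiteness forces each $m_j$ to be a nonnegative integer with $0\le m_1\le\cdots\le m_r\le\ell$, i.e. $\underline m\in D_\nu$; the case distinction in the definition of $\ell$ (according to whether $\alpha$ is an odd integer) reflects whether the critical Gamma factor produces a genuine pole or merely sits at the boundary of convergence.

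The main obstacle I anticipate is precisely this last square-integrability computation together with the parameter bookkeeping: one must pin down the exact convergence threshold of the conical integral and show that it coincides with $\ell$, and then match the resulting finite family $\{A^{2,\alpha}_{\underline m}:\underline m\in D_\nu\}$ with the abstract discrete spectrum supplied by Shimeno, so as to confirm that these exhaust the holomorphic discrete series occurring in $L^2(\Omega,\mu_\alpha)$ and that none is counted twice. Once completeness and irreducibility are in hand, the theorem follows by comparing the explicit list with Shimeno's decomposition.
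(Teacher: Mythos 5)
Your proposal matches the paper's treatment: the paper gives no independent proof of this theorem, stating it explicitly as a reformulation of Shimeno's abstract decomposition (\cite[Theorem 5.10]{Shimeno-jfa}) combined with Zhang's explicit realization of the discrete parts (\cite[Theorem 4.7, Remark 4.8]{Zhang}), which is precisely the combination you invoke. Your extra sketch --- the highest-weight verification for $\bar{\Delta}_{\underline{m}}(q(z))$ and the square-integrability computation pinning down $D_\nu$ --- is a reconstruction of the content of the cited references rather than a different route, so the two arguments coincide in substance.
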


We can now state the main theorem of this section.
 
\begin{theorem}\label{Th-rds} Let $\underline{\delta}=(\delta,\delta,\ldots,\delta)$ such that $s\frac{n}{r}=\frac{n}{r}+\delta-\nu$ and $0\leq \delta<[\frac{\nu-p}{2}]$. Then the Poisson transform $\mathcal{P}_{\lambda_s,\nu}$ is a $G$-equivariant Szge\"o type map  from  the space $\mathcal{B}(S,s,\nu)_K$
of $K$-finite elements
 onto the $K$-finite elements
in the relative discrete series  $A^{2,\alpha}_{\underline{\delta}}$.
\end{theorem}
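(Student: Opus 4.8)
The plan is to exploit the $G$-equivariance of the Poisson transform together with the irreducibility of the target, so that surjectivity reduces to a single nonvanishing computation. By construction $\mathcal{P}_{\lambda_s,\nu}$ intertwines the principal series action on $\mathcal{B}(S,s,\nu)$ with the weighted $G$-action (\ref{g-weigt-act}) on functions on $\Omega$, and it carries $K$-finite vectors to $K$-finite vectors; as recorded at the end of \S4 its image lies in $\mathcal{A}(G/K;\mathcal{M}_{\lambda_s,\nu})$. The structural point is this: if I can show (i) that the image is contained in the irreducible relative discrete series $A^{2,\alpha}_{\underline{\delta}}$ and (ii) that it is nonzero, then by equivariance the $K$-finite part of the image is a $(\mathfrak{g},K)$-submodule of the (irreducible) $(\mathfrak{g},K)$-module of $K$-finite vectors of $A^{2,\alpha}_{\underline{\delta}}$; being nonzero, it must be the whole module. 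Thus everything reduces to the containment (i) and the nonvanishing (ii), and step (ii) simultaneously fulfils the stated goal of computing the transform explicitly.

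For (i) I would argue through infinitesimal characters. Every $\mathcal{P}_{\lambda_s,\nu}f$ is a joint eigenfunction for $D_\nu(G/K)$ with eigencharacter $\chi_{\lambda_s,\nu}$, where $\lambda_s=\rho+2n(s-1)\xi_e^*-\nu r\xi_e^*$; the hypothesis $s\frac{n}{r}=\frac{n}{r}+\delta-\nu$ pins down the shift via $\frac{n}{r}(s-1)+\nu=\delta$. I then compare $\chi_{\lambda_s,\nu}$ with the eigencharacters occurring in the discrete part of the Plancherel decomposition of $L^2(\Omega,\mu_\alpha)$. By Shimeno \cite{Shimeno-jfa} the discrete summands are exactly the spaces $A^{2,\alpha}_{\underline{m}}$, $\underline{m}\in D_\nu$, with pairwise distinct eigencharacters, and the range condition $0\le\delta<[\frac{\nu-p}{2}]$ is precisely what places $\underline{\delta}=(\delta,\ldots,\delta)$ in $D_\nu$ and matches $\chi_{\lambda_s,\nu}$ with the eigencharacter of $A^{2,\alpha}_{\underline{\delta}}$. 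Combined with the $L^2$-integrability supplied by the explicit formula in step (ii), this forces the image into $A^{2,\alpha}_{\underline{\delta}}$; since that space consists of holomorphic discrete series vectors, $\mathcal{P}_{\lambda_s,\nu}$ is automatically of Szeg\"o type.

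The heart of the argument, and the main obstacle, is the explicit computation establishing (ii). I would apply $\mathcal{P}_{s,\nu}$ to the distinguished $K$-type vector on $S$ given by a conical polynomial $\Delta_{\underline{\delta}}$, expand the Poisson kernel $P_{s,\nu}(z,u)=h(z,z)^{s\frac{n}{r}}|h(z,u)|^{-2s\frac{n}{r}}h(z,u)^{-\nu}$ into Faraut--Kor\'anyi spherical polynomials, and integrate term by term over the Shilov boundary using orthogonality and the Gindikin $\Gamma_\Omega$-factorization of the resulting beta-type integrals, as in \cite{Faraut-Koranyi, Zhang}. The substitution $s\frac{n}{r}=\frac{n}{r}+\delta-\nu$ should collapse the expansion onto the single component indexed by $\underline{\delta}$, yielding $\mathcal{P}_{s,\nu}f(z)=c(\delta,\nu)\,\bar{\Delta}_{\underline{\delta}}(q(z))$ with $q(z)=\bar{z}^{z}$, which is exactly the highest weight vector of $A^{2,\alpha}_{\underline{\delta}}$ in the realization of \cite{Zhang} and is manifestly square-integrable against $\mu_\alpha$. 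The delicate part is tracking the normalizing constant $c(\delta,\nu)$ as a ratio of Gamma factors (equivalently, a product of Pochhammer symbols arising from the $c$-function $e_\nu(\lambda_s)$ and the location of its zeros) and verifying that it does not vanish precisely on the range $0\le\delta<[\frac{\nu-p}{2}]$. Once $c(\delta,\nu)\neq 0$ is secured the image is nonzero, and together with (i) and the reduction above this proves that $\mathcal{P}_{\lambda_s,\nu}$ maps $\mathcal{B}(S,s,\nu)_K$ onto the $K$-finite part of $A^{2,\alpha}_{\underline{\delta}}$.
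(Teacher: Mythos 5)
Your overall strategy coincides with the paper's: everything is made to rest on the explicit evaluation of $\mathcal{P}_{s,\nu}$ on the conical function $\bar{\Delta}_{\underline{\delta}}$, whose image is recognized as a nonzero multiple of the highest weight vector $\bar{\Delta}_{\underline{\delta}}(q(z))$ of $A^{2,\alpha}_{\underline{\delta}}$, after which one concludes by $G$-equivariance and irreducibility. However, the step you defer as ``the heart of the argument'' is exactly where the paper's proof lives, and your sketch of it would not go through as written, for two concrete reasons. (a) After expanding $h(z,u)^{-\nu-\sigma}$ and $h(u,z)^{-\sigma}$ (with $\sigma=s\frac{n}{r}$) into Faraut--Kor\'anyi series, the integrand on $S$ is $K_{\underline{\mathbf{m}}}(z,u)\,K_{\underline{\mathbf{m}}'}(u,z)\,\bar{\Delta}_{\underline{\delta}}(u)$, and Schur orthogonality does not apply to it because of the extra conical factor. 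The missing idea is the shift identity (the paper's key lemma) $\Delta(z)^{\delta}\,\bar{\Delta}(w)^{\delta}\,K_{\underline{\mathbf{m}}}(z,w)=\bigl(\tfrac{n}{r}+\underline{\mathbf{m}}\bigr)_{\delta}K_{\underline{\mathbf{m}}+\delta}(z,w)$, proved by noting that multiplication by $\Delta^{\delta}$ is a $K$-intertwiner from $\mathcal{P}_{\underline{\mathbf{m}}}$ to $\mathcal{P}_{\underline{\mathbf{m}}+\delta}$ and evaluating at $z=w=e$; it absorbs $\bar{\Delta}_{\underline{\delta}}(u)$ into a shift $K_{\underline{\mathbf{m}}}\mapsto K_{\underline{\mathbf{m}}+\delta}$, and only then does orthogonality kill all terms with $\underline{\mathbf{m}}'\neq\underline{\mathbf{m}}+\delta$. (b) Your predicted mechanism --- that the substitution $s\frac{n}{r}=\frac{n}{r}+\delta-\nu$ ``collapses the expansion onto the single component indexed by $\underline{\delta}$'' --- is not what happens. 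The substitution gives $\nu+\sigma=\frac{n}{r}+\delta$, which makes $(\nu+\sigma)_{\underline{\mathbf{m}}}$ cancel against the factor $(\tfrac{n}{r}+\delta)_{\underline{\mathbf{m}}}$ arising from the Fischer/$L^2(S)$ normalization $(\tfrac{n}{r})_{\underline{\mathbf{m}}+\delta}^{-1}$; what survives is the \emph{full} series $\sum_{\underline{\mathbf{m}}}(\sigma+\delta)_{\underline{\mathbf{m}}}K_{\underline{\mathbf{m}}}(z,z)$, which re-sums to $h(z,z)^{-(\sigma+\delta)}$ and, against the prefactor $h(z,z)^{\sigma}$, yields $\frac{(\sigma)_{\delta}}{(n/r)_{\delta}}\,\bar{\Delta}_{\underline{\delta}}(z)\,h(z,z)^{-\delta}=\frac{(\sigma)_{\delta}}{(n/r)_{\delta}}\,\bar{\Delta}_{\underline{\delta}}(q(z))$. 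The answer comes from a resummation identity, not from term-by-term vanishing; without (a) and (b) your computation cannot be completed.

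There is also a logical gap in your containment step (i): you justify $L^{2}$-membership of the image by ``the explicit formula in step (ii)'', but that formula concerns only the single function $\bar{\Delta}_{\underline{\delta}}$, so the infinitesimal-character/Plancherel argument cannot be applied to the image of an arbitrary $K$-finite section as stated --- the reasoning is circular at that point. The cleaner route (implicit in the paper, which deduces the theorem directly from the proposition) stays inside $(\mathfrak{g},K)$-modules: the image of $\bar{\Delta}_{\underline{\delta}}$ is a nonzero vector lying in the $K$-finite part of $A^{2,\alpha}_{\underline{\delta}}$; the $(\mathfrak{g},K)$-submodule it generates under the equivariant action is, by irreducibility, all of $\bigl(A^{2,\alpha}_{\underline{\delta}}\bigr)_{K}$, which is the surjectivity claimed in the theorem. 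Nonvanishing of the constant, which you leave as delicate, is immediate once the formula is in hand: under $\nu>p-1$ and $0\le\delta<[\frac{\nu-p}{2}]$ every factor of $(\sigma)_{\delta}$ is strictly negative, hence the product is nonzero.
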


This theorem is consequence of the following proposition.
  
\begin{proposition}\label{Th-rds}
Under the same conditions as in Theorem \ref{Th-rds} 
we have
\begin{equation}
(\mathcal{P}_{s,\nu}\bar{\Delta}_{\underline{\delta}})(z)=
\int_{S} P_{s,\nu}(z,u) \bar{\Delta}_{\underline{\delta}}(u) du=\frac{(s\frac{n}{r})_\delta}{(\frac{n}{r})_\delta} \bar{\Delta}_{\underline{\delta}}(q(z)).\\
\end{equation}
\end{proposition}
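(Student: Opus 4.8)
The plan is to compute the integral
$$
\int_{S} P_{s,\nu}(z,u)\,\bar{\Delta}_{\underline{\delta}}(u)\,du
$$
by first reducing to the base point $z=0$ and then propagating to general $z$ by the covariance of the Poisson kernel. At $z=0$ the kernel $P_{s,\nu}(0,u)=h(0,0)^{sn/r}|h(0,u)|^{-2sn/r}h(0,u)^{-\nu}$ collapses to a constant, so the whole content is an orthogonality/normalization computation for $\bar{\Delta}_{\underline{\delta}}$ on the Shilov boundary. The natural tool is the $K$-equivariance: since $K_1$ fixes the base tripotent $e$, the function $z\mapsto(\mathcal{P}_{s,\nu}\bar{\Delta}_{\underline{\delta}})(z)$ transforms under $K$ by the same $K$-type as $\bar{\Delta}_{\underline{\delta}}(q(z))$, which by Theorem 8.3 is the highest weight vector of $A^{2,\alpha}_{\underline{\delta}}$. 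Because the relative discrete series $A^{2,\alpha}_{\underline{\delta}}$ is irreducible and this $K$-type occurs with multiplicity one, the Poisson image must be a scalar multiple of $\bar{\Delta}_{\underline{\delta}}(q(z))$; the entire problem reduces to pinning down that scalar.

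To identify the scalar I would evaluate both sides along the diagonal $z=\sum_j t_j e_j$ (or take a derivative of suitable order at $0$) and compare leading coefficients. Here the explicit formula for the Poisson kernel in the polar/diagonal coordinates, together with the factorization $\Delta_{\underline{\delta}}(z)=\prod_j z_j^{\delta}$, turns the boundary integral into a product of one-dimensional beta-type integrals over the circle factors of $S$. Each such factor yields a ratio of Pochhammer symbols, and the product should assemble into $(s\tfrac{n}{r})_\delta/(\tfrac{n}{r})_\delta$. Alternatively, and perhaps more cleanly, one can expand the kernel $h(z,u)^{-(\nu+sn/r)}\,\overline{h(z,u)}^{-sn/r}\,h(z,z)^{sn/r}$ in the basis of conical functions $\{\Delta_{\underline{m}}\}$ using the known Faraut--Korányi binomial expansion of $h(z,u)^{-\mu}$, and then use orthogonality of the $\Delta_{\underline{m}}$ on $S$ to extract precisely the $\underline{m}=\underline{\delta}$ term, whose coefficient is a Gindikin-type Gamma-factor ratio reducing to $(s\tfrac{n}{r})_\delta/(\tfrac{n}{r})_\delta$.

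The main obstacle will be the explicit evaluation of the constant, not the structural statement that the image is a multiple of $\bar{\Delta}_{\underline{\delta}}(q(z))$. Specifically, the delicate point is to track how the two different exponents $\nu+sn/r$ (holomorphic) and $sn/r$ (antiholomorphic) in the generalized kernel interact with the conical function $\bar{\Delta}_{\underline{\delta}}$ under the boundary integration, and to verify that the constraints $s\tfrac{n}{r}=\tfrac{n}{r}+\delta-\nu$ and $0\le\delta<[\tfrac{\nu-p}{2}]$ are exactly what make the relevant Gamma factors finite and nonvanishing. One must check that these parameter conditions place $\underline{\delta}$ in $D_\nu$ so that $A^{2,\alpha}_{\underline{\delta}}$ genuinely occurs in $L^2(\Omega,\mu_\alpha)$, and that the Pochhammer ratio is well-defined (no spurious poles). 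Once the scalar is computed at $z=0$ and shown to equal $(s\tfrac{n}{r})_\delta/(\tfrac{n}{r})_\delta$, the covariance relation
$$
P_{s,\nu}(gz,gu)=J_g(z)^{\nu/p}P_{s,\nu}(z,u)\,\overline{J_g(u)^{\nu/p}}
$$
together with the transformation law of $\bar{\Delta}_{\underline{\delta}}(q(z))$ extends the identity from the base point to all $z\in\Omega$, completing the proof. The deduction of Theorem 8.4 from the proposition is then immediate: the map is $G$-equivariant and carries the highest weight vector to a nonzero multiple of the highest weight vector of the irreducible $A^{2,\alpha}_{\underline{\delta}}$, hence is onto on $K$-finite elements.
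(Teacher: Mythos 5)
Your proposal has a genuine gap, and it sits exactly where you claim the problem is easy. The structural step is circular: you conclude that $\mathcal{P}_{s,\nu}\bar{\Delta}_{\underline{\delta}}$ must be proportional to $\bar{\Delta}_{\underline{\delta}}(q(z))$ because that function is the highest weight vector of the irreducible module $A^{2,\alpha}_{\underline{\delta}}$, where its $K$-type has multiplicity one. But a priori the Poisson image is only a joint eigenfunction on $\Omega$; that it lies in the relative discrete series $A^{2,\alpha}_{\underline{\delta}}$ is precisely the content of the theorem which the paper \emph{deduces from} this proposition, not an available hypothesis. Transforming under $K$ by the right $K$-type does not determine a function on $\Omega$ up to scalar (the radial dependence is free), so you would need a multiplicity-one statement inside the eigenspace $\mathcal{A}(G/K,\mathcal{M}_{\lambda_s,\nu})$ at a parameter $s$ that lies exactly in the singular set (\ref{Shimeno-condition}) (indeed $\frac{4n(1-s)}{r}=4(\nu-\delta)\in\Lambda_2$), where the generic theory gives no such control. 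A decisive symptom: your $K$-equivariance reasoning applies verbatim to \emph{every} $s$, yet the conclusion that the image is a multiple of $\bar{\Delta}_{\underline{\delta}}(q(z))$ holds only for $s\frac{n}{r}=\frac{n}{r}+\delta-\nu$; so the structural shortcut cannot be valid. Your normalization scheme also fails concretely: for $\delta>0$ both sides vanish at $z=0$, since $\bar{\Delta}_{\underline{\delta}}|_S$ is a matrix coefficient of a nontrivial irreducible $K$-type (hence $\int_S\bar{\Delta}_{\underline{\delta}}(u)\,du=0$) and $q(0)=0$; so no constant can be read off at the base point, and the covariance of $P_{s,\nu}$ cannot propagate the identity from one point, because the integral carries the non-invariant density $\bar{\Delta}_{\underline{\delta}}(u)$ — unlike Theorem \ref{theorem-Hua-Poisson}, which is a kernel identity for each fixed $u$. (Also $S=K/K_1$ is not a product of circles outside the polydisc, so the proposed factorization into one-dimensional beta integrals does not exist.)

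The viable route is the one you mention only as an alternative, and it is the paper's actual proof, but its mechanism differs from your description. Writing $\sigma=s\frac{n}{r}$, one expands \emph{both} factors, $h(z,u)^{-\nu-\sigma}=\sum_{\underline{m}}(\nu+\sigma)_{\underline{m}}K_{\underline{m}}(z,u)$ and $h(u,z)^{-\sigma}=\sum_{\underline{m}'}(\sigma)_{\underline{m}'}K_{\underline{m}'}(u,z)$, by Faraut--Kor\'anyi; the factor $\bar{\Delta}_{\underline{\delta}}(u)$ is absorbed by Lemma \ref{Formula-Delta-K}, $\Delta(z)^{\delta}\bar{\Delta}(u)^{\delta}K_{\underline{m}}(z,u)=c(\underline{m},\delta)K_{\underline{m}+\delta}(z,u)$, and Schur orthogonality on $S$ then pairs $\underline{m}'$ with $\underline{m}+\underline{\delta}$. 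Thus \emph{every} $\underline{m}$ survives, not just the single term $\underline{m}=\underline{\delta}$ as you suggest. One is left with $h(z,z)^{\sigma}\,\bar{\Delta}_{\underline{\delta}}(z)\sum_{\underline{m}}(\nu+\sigma)_{\underline{m}}(\sigma)_{\underline{m}+\underline{\delta}}\,(\frac{n}{r})_{\underline{m}+\underline{\delta}}^{-1}K_{\underline{m}}(z,z)$, and the crux is that this resums in closed form only because of the hypothesis $\sigma=\frac{n}{r}+\delta-\nu$: then $(\nu+\sigma)_{\underline{m}}=(\frac{n}{r}+\delta)_{\underline{m}}$ cancels against $(\frac{n}{r})_{\underline{m}+\underline{\delta}}=(\frac{n}{r})_{\underline{\delta}}(\frac{n}{r}+\delta)_{\underline{m}}$, leaving $\frac{(\sigma)_{\underline{\delta}}}{(\frac{n}{r})_{\underline{\delta}}}\sum_{\underline{m}}(\sigma+\delta)_{\underline{m}}K_{\underline{m}}(z,z)=\frac{(\sigma)_{\underline{\delta}}}{(\frac{n}{r})_{\underline{\delta}}}\,h(z,z)^{-(\sigma+\delta)}$, after which $\bar{\Delta}_{\underline{\delta}}(z)h(z,z)^{-\delta}=\bar{\Delta}_{\underline{\delta}}(q(z))$ finishes. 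So the parameter condition enters as the algebraic identity that collapses the series, not — as you propose — as a condition ensuring finiteness or nonvanishing of Gamma factors; repairing your write-up means carrying out this resummation, which is the entire content of the paper's proof.
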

 
 Let us first prove the formula below
 
 \begin{lemma}\label{Formula-Delta-K} For any highest weight $\underline{m}$ the following formula holds
 \begin{equation}
\Delta(z)^\delta\,\bar{\Delta}(w)^\delta\,K_{\underline{\mathbf{m}}}(z,w)=c(\underline{\mathbf{m}},\delta)\,K_{\underline{\mathbf{m}}+\delta}(z,w)
\end{equation}
where $c(\underline{\mathbf{m}},\delta)=(\frac{n}{r}+\underline{\mathbf{m}})_
{\underline \delta}:=\prod_{j=1}^r(\frac{a}{2}(r-j)+1+m_j)_\delta$
\end{lemma}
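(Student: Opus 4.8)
The plan is to read $K_{\underline{\mathbf m}}(z,w)$ as the reproducing kernel of the irreducible $K$-submodule $\mathcal P_{\underline{\mathbf m}}(V)\subset\mathcal P(V)$ — the polynomials with highest weight $\underline{\mathbf m}$, generated by the conical polynomial $\Delta_{\underline{\mathbf m}}$ — with respect to the Fischer--Fock inner product $\langle\cdot,\cdot\rangle_{\mathcal F}$ on $\mathcal P(V)$. The Jordan determinant $\Delta=\Delta_r$ is $K$-semiinvariant, transforming by a character $\chi$ of $K$, so multiplication $M_{\Delta^\delta}\colon f\mapsto\Delta^\delta f$ is $K$-equivariant. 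Because $\Delta^\delta\Delta_{\underline{\mathbf m}}=\Delta_{\underline{\mathbf m}+\delta}$ (raising every part by $\delta$ multiplies $\Delta_{\underline{\mathbf m}}$ by $\Delta_r^\delta$ and leaves the differences $m_j-m_{j+1}$ unchanged), $M_{\Delta^\delta}$ sends the highest weight vector of $\mathcal P_{\underline{\mathbf m}}$ to that of $\mathcal P_{\underline{\mathbf m}+\delta}$ and is thus a $K$-isomorphism $\mathcal P_{\underline{\mathbf m}}\to\mathcal P_{\underline{\mathbf m}+\delta}$.

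First I would invoke Schur's lemma: pulling back $\langle\cdot,\cdot\rangle_{\mathcal F}$ from $\mathcal P_{\underline{\mathbf m}+\delta}$ along $M_{\Delta^\delta}$ gives a $K$-invariant inner product on the irreducible $\mathcal P_{\underline{\mathbf m}}$, hence a positive scalar multiple $c(\underline{\mathbf m},\delta)$ of $\langle\cdot,\cdot\rangle_{\mathcal F}$; that is, $\langle\Delta^\delta p,\Delta^\delta q\rangle_{\mathcal F}=c(\underline{\mathbf m},\delta)\,\langle p,q\rangle_{\mathcal F}$ for all $p,q\in\mathcal P_{\underline{\mathbf m}}$. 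Therefore, if $\{e_i\}$ is a Fischer-orthonormal basis of $\mathcal P_{\underline{\mathbf m}}$, then $\{c(\underline{\mathbf m},\delta)^{-1/2}\Delta^\delta e_i\}$ is one of $\mathcal P_{\underline{\mathbf m}+\delta}$, and forming $\sum_i e_i(z)\overline{e_i(w)}$ on both sides yields immediately
$$\Delta(z)^\delta\bar\Delta(w)^\delta K_{\underline{\mathbf m}}(z,w)=c(\underline{\mathbf m},\delta)\,K_{\underline{\mathbf m}+\delta}(z,w),$$
the desired identity with a single undetermined constant.

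It then remains to identify $c(\underline{\mathbf m},\delta)$, which by the displayed relation is the norm ratio $\|\Delta_{\underline{\mathbf m}+\delta}\|_{\mathcal F}^2/\|\Delta_{\underline{\mathbf m}}\|_{\mathcal F}^2$. I would compute it through the adjoint: in the Fischer product the adjoint of $M_{\Delta^\delta}$ is the constant-coefficient operator $\Delta(\partial)^\delta$, so $c(\underline{\mathbf m},\delta)\,\Delta_{\underline{\mathbf m}}=\Delta(\partial)^\delta\Delta^\delta\Delta_{\underline{\mathbf m}}=\Delta(\partial)^\delta\Delta_{\underline{\mathbf m}+\delta}$. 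The key input is the one-step lowering identity
$$\Delta(\partial)\,\Delta_{\underline{\mathbf m}+\underline 1}=\Bigl[\prod_{j=1}^r\bigl(\tfrac a2(r-j)+1+m_j\bigr)\Bigr]\Delta_{\underline{\mathbf m}},$$
the conical-polynomial refinement of the classical determinant $b$-function $\Delta(\partial)\Delta^s=\prod_{j=1}^r(s+\tfrac a2(j-1))\,\Delta^{s-1}$ (which is the case $\underline{\mathbf m}=0$). Iterating this $\delta$ times, with $\underline{\mathbf m}$ replaced by $\underline{\mathbf m}+k\underline 1$ at the $k$-th stage, telescopes the linear factors into Pochhammer symbols and gives $c(\underline{\mathbf m},\delta)=\prod_{j=1}^r(\tfrac a2(r-j)+1+m_j)_\delta$.

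I expect the main obstacle to be exactly this one-step identity, equivalently the precise Fischer norm ratio of conical polynomials. The cleanest justification is that $\Delta(\partial)M_{\Delta}$ is a $K$-endomorphism of the irreducible $\mathcal P_{\underline{\mathbf m}}$, hence scalar, so one needs only evaluate that scalar on the highest weight vector — either by restricting to the frame $z=\sum_j t_j c_j$, where $\Delta_{\underline{\mathbf m}}(z)=\prod_j t_j^{m_j}$ and $\Delta(\partial)$ reduces to the one-variable determinant computation, or by quoting the Faraut--Kor\'anyi norm formula for conical polynomials. Some care with the sign and normalization in the $b$-function is needed so that the factors emerge as $\tfrac a2(r-j)+1+m_j$ and not their reflections.
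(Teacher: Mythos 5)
Your proof is correct, and its first half coincides with the paper's argument: both start from the observation that $f\mapsto\Delta(z)^\delta f(z)$ is a $K$-intertwining isomorphism of $\mathcal{P}_{\underline{\mathbf{m}}}$ onto $\mathcal{P}_{\underline{\mathbf{m}}+\delta}$, hence $\Delta(z)^\delta\bar{\Delta}(w)^\delta K_{\underline{\mathbf{m}}}(z,w)$ must be proportional to $K_{\underline{\mathbf{m}}+\delta}(z,w)$ (you spell out the Schur-lemma and orthonormal-basis mechanism that the paper leaves implicit). Where you genuinely diverge is in identifying the constant. The paper evaluates at $z=w=e$, where $\Delta(e)=1$, and quotes the Faraut--Kor\'anyi formula $K_{\underline{\mathbf{m}}}(e,e)=d_{\underline{\mathbf{m}}}/(n/r)_{\underline{\mathbf{m}}}$ together with the equality of dimensions $d_{\underline{\mathbf{m}}}=d_{\underline{\mathbf{m}}+\delta}$ (immediate, since multiplication by $\Delta^\delta$ is a linear isomorphism), so that $c(\underline{\mathbf{m}},\delta)=(n/r)_{\underline{\mathbf{m}}+\delta}/(n/r)_{\underline{\mathbf{m}}}=(\tfrac{n}{r}+\underline{\mathbf{m}})_{\underline{\delta}}$ with no differential operators at all. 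You instead realize $c(\underline{\mathbf{m}},\delta)$ as the Fischer-norm ratio $\|\Delta_{\underline{\mathbf{m}}+\underline{\delta}}\|^2_{\mathcal F}/\|\Delta_{\underline{\mathbf{m}}}\|^2_{\mathcal F}$ and compute it via the Fischer adjoint $\Delta(\partial)^\delta$ and the iterated Cayley-type identity $\Delta(\partial)\Delta_{\underline{\mathbf{m}}+\underline{1}}=\prod_{j}\bigl(\tfrac a2(r-j)+1+m_j\bigr)\Delta_{\underline{\mathbf{m}}}$; this is correct and consistent with the paper (e.g.\ on $V=M_2(\mathbb C)$ with $\underline{\mathbf{m}}=(1,0)$, $\delta=1$, both give $c=3$), but it purchases the result with a harder input, namely the $b$-function identity for general conical polynomials rather than just powers of $\Delta$. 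One caveat on your two suggested justifications of that identity: quoting the Faraut--Kor\'anyi norm formulas works, but the ``restrict to the frame $z=\sum_j t_jc_j$'' shortcut does not suffice by itself, because $\Delta(\partial)$ involves derivatives transverse to the diagonal that the restriction of $\Delta_{\underline{\mathbf{m}}+\underline{1}}$ to the frame cannot detect; after the Schur reduction one still needs an honest evaluation of the scalar, and at that point the paper's evaluation of the kernels at $e$ is the shorter road.
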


\begin{proof}
Since  $f(z)\mapsto \Delta(z)^\delta f(z)$ is a K-intertwining map from 
$\mathcal{P}_{\underline{\mathbf{m}}}$ onto $\mathcal{P}_{\underline{\mathbf{m}}+\delta}$ we have that $\Delta(z)^\delta\,\overline{\Delta(w)^\delta}\,K_{\underline{\mathbf{m}}}(z,w)$ is equal to $K_{\underline{\mathbf{m}}+\delta}(z,w)$ up to a constant $c(\underline{\mathbf{m}},\delta)$. Now taking $z=w=e$ and using \cite[Lemma 3.1, Theorem 3.4]{Faraut-Koranyi} we find that the constant is 
$$
c(\underline{\mathbf{m}},\delta) =\frac{K_{\underline{\mathbf{m}}}(e,e)}{K_{\underline{\mathbf{m}}+\delta}(e,e)}      
      =\frac{d_{\underline{\mathbf{m}}}}{({n}/{r})_{\underline{\mathbf{m}}}} \frac{({n}/{r})_{\underline{\mathbf{m}}+\delta}} {d_{\underline{\mathbf{m}}+\delta}}  
    =  (\frac{n}{r}+\underline{\mathbf{m}})_\delta\, .
$$
%
 %
\end{proof}
 \begin{proof}[Proof of Proposition \ref{Th-rds}] Put $\sigma=s\frac{n}{r}$. We compute the image $\mathcal{P}_{s,\nu}\bar{\Delta}_{\underline{\delta}}$,  
\begin{equation*}
\begin{split}
  (\mathcal{P}_{s,\nu}\bar{\Delta}_{\underline{\delta}})(z)=&\int_S h(z,u)^{-\nu}h(z,z)^{\sigma}h(z,u)^{-\sigma}h(u,z)^{-\sigma}   \bar{\Delta}_{\underline{\delta}}(u) du \\
  =&h(z,z)^\sigma \int_S h(z,u)^{-\nu-\sigma} h(u,z)^{-\sigma}  \bar{\Delta}_{\underline{\delta}}(u) du
\end{split}
\end{equation*}
 We use the Faraut-Koranyi expansion \cite[Theorem 3.8]{Faraut-Koranyi-90} of the reproducing kernels $h(z,u)^{-\nu-\sigma}$ and  $h(u,z)^{-\sigma}$ so that,
\begin{equation*}
\begin{split}
 (\mathcal{P}_{s,\nu}\bar{\Delta}_{\underline{\delta}})(z)=
 & h(z,z)^\sigma
  \int_S 
  \bigl[ 
  \sum_{\underline{\mathbf{m}}\geq 0}(\nu+\sigma)_{\underline{\mathbf{m}}} K_{\underline{\mathbf{m}}}(z,u) \times \\
   & \hfill  \times 
   \sum_{\underline{\mathbf{m}}'\geq 0}(\sigma)_{\underline{\mathbf{m}}'} K_{\underline{\mathbf{m}}'}(u,z) 
   \bigr ]
   \bar{\Delta}_{\underline{\delta}}(u) du
\end{split}
\end{equation*}
 which by  Lemma \ref{Formula-Delta-K}  is
\begin{equation*}
\begin{split}
  (\mathcal{P}_{s,\nu}\bar{\Delta}_{\underline{\delta}})(z)
   =& h(z,z)^\sigma
  \int 
  \bigl[ 
  \sum_{\underline{\mathbf{m}}\geq 0}(\nu+\sigma)_{\underline{\mathbf{m}}} K_{\underline{\mathbf{m}}+\delta}(z,u) \Delta_{\underline{\delta}}(z)^{-1} c(\underline{\mathbf{m}},\delta) \times \\
   & \hfill  \times 
   \sum_{\underline{\mathbf{m}}'\geq 0}(\sigma)_{\underline{\mathbf{m}}'} K_{\underline{\mathbf{m}}'}(u,z) 
   \bigr ]
     du\\
   =& h(z,z)^\sigma 
   \sum_{\underline{\mathbf{m}}\geq 0} 
   \bigl[ 
   (\nu+\sigma)_{\underline{\mathbf{m}}} (\sigma)_{\underline{\mathbf{m}}+\delta}  \, c(\underline{\mathbf{m}},\delta) \Delta_{\underline{\delta}}(z)^{-1} \times \\
   &\hfill   \times \int K_{\underline{\mathbf{m}}+\delta}(z,u) K_{\underline{\mathbf{m}}+\delta}(u,z)   du
     \bigr ]
\end{split}
\end{equation*}
The last equality follows from the Schur orthogonality relation. Furthermore, since the ratio  of the Fischer inner product and the standard $K$-invariant inner product of $L^2(S)$ is constant, see \cite[Corollary 3.5]{Faraut-Koranyi} or \cite{Upmeier-Toeplitz}, we obtain
\begin{equation*}
\begin{split}
  (\mathcal{P}_{s,\nu}\bar{\Delta}_{\underline{\delta}})(z) =& h(z,z)^\sigma  
  \sum_{\underline{\mathbf{m}}\geq 0} 
  \bigl[ 
   (\nu+\sigma)_{\underline{\mathbf{m}}} (\sigma)_{\underline{\mathbf{m}}+\delta}  \, c(\underline{\mathbf{m}},\delta) \Delta_{\underline{\delta}}(z)^{-1} \times \\
    & \hspace{3cm} \times  (\frac{n}{r})^{-1}_{\underline{\mathbf{m}}+\delta} K_{\underline{\mathbf{m} }+\delta}(z,z) 
    \bigr ]\\
\end{split}
\end{equation*}
    which again by Lemma \ref{Formula-Delta-K} is
\begin{equation*}
\begin{split}
  (\mathcal{P}_{s,\nu}\bar{\Delta}_{\underline{\delta}})(z) 
  =& h(z,z)^\sigma \bar{\Delta}_{\underline{\delta}}(z)
   \sum_{\underline{\mathbf{m}}\geq 0}   
    (\nu+\sigma)_{\underline{\mathbf{m}}} (\sigma)_{\underline{\mathbf{m}}+\delta}   (\frac{n}{r})^{-1}_{\underline{\mathbf{m}}+\delta} \times \\
  & \hspace{3cm} \times  \bar{\Delta}_{\underline{\mathbf{\delta}}}(z) K_{\underline{\mathbf{m}}}(z,z)\\
   =& h(z,z)^\sigma  
   \frac{(\sigma)_{\delta}}{(\frac{n}{r})_{\delta } } \bar{\Delta}_{\underline{\delta}}(z)
    \sum_{\underline{\mathbf{m}}\geq 0}  
    (\sigma+\delta)_{\underline{\mathbf{m}}}  K_{\underline{\mathbf{m}}}(z,z)
     \\
      =& h(z,z)^\sigma  
   \frac{(\sigma)_{\delta}}{(\frac{n}{r})_{\delta } } \bar{\Delta}_{\underline{\delta}}(z) h(z,z)^{-(\sigma+\delta)}\\
   =&  \frac{(\sigma)_{\delta}}{(\frac{n}{r})_{\delta } } \frac{\bar{\Delta}_{\underline{\delta}}(z)}{h(z,z)^\delta}\\
\end{split}
\end{equation*}
Finally since $\frac{\bar{\Delta}(z)}{h(z,z)}=\bar{\Delta}(q(z))$, see \cite[Corollary 4.4]{Zhang}, the proof is completed.

\end{proof}

\bibliographystyle{amsplain}

\begin{thebibliography}{999}

\bibitem{Berline-Vergne} 
Berline, N.; Vergne, M. {\'E}quations de Hua et
  noyau de Poisson. {\it Noncommutative harmonic analysis and Lie groups (Marseille, 1980)},
  1--51, Lecture Notes in Math., {\bf 880}, {\it Springer, Berlin-New York},
  1981. 

\bibitem{Dooley-Orsted-Zhang} Dooley, A. H.; {\O}rsted, B; Zhang, G. 
Relative discrete series of line bundles over bounded symmetric domains
{\it Ann. Inst. Fourier} {\bf 46} (1996), 1011-1026.

\bibitem{Englis-Peetre} Engli\v{s}, M.; Peetre, J.
Covariant Cauchy-Riemann operators and higher Laplacians on K\"ahler manifolds. 
{\it J. Reine Angew. Math.}  {\bf 478}  (1996), 17--56.

\bibitem{Faraut-Koranyi-90} Faraut, J.; Kor{\'a}nyi, A.
Function spaces and reproducing kernels on bounded symmetric domains. {\it J. Funct. Anal.} {\bf 88} 1990, 64--89.

\bibitem{Faraut-Koranyi} Faraut, J.; Kor{\'a}nyi, A.  Analysis on
  Symmetric Cones, {\it Oxford Mathematical Monographs, Clarendon Press,
    Oxford}, 1994.

 \bibitem{Helgason1} Helgason, S. A duality for symmetric spaces with applications to
group representations.  {\it Advances in Math.}  {\bf 5} (1970), 1--154.

\bibitem{Helgason} Helgason, S. Groups and geometric analysis. Integral geometry,
invariant differential operators, and spherical functions. Pure and Applied
Mathematics, 113. {\it Academic Press, Inc., Orlando, FL}, 1984.

\bibitem{Hua} Hua, L. K. Harmonic analysis of functions of several
  complex variables in the classical domains. {\it American Mathematical Society,
Providence, R.I.} 1963 iv+164 pp

\bibitem{Johnson} Johnson, K.~D. Generalized Hua-operators and parabolic subgroups. The cases of SL(n,C) and SL(n,R). {\it Trans. Amer. Math. Soc.} {\bf 281} (1984),  417Ð429.
 
\bibitem{Johnson-Koranyi} Johnson, K.~D.; Kor{\'a}nyi, A. The Hua
  operators on bounded symmetric domains of tube type.  {\it Ann. of Math.
  (2)}  {\bf 111}  (1980), no. 3, 589--608.

\bibitem{K et Al.} Kashiwara, M.; Kowata, A.; Minemura, K.; Okamoto, K.; Oshima, T.;
Tanaka, M.
Eigenfunctions of invariant differential operators on a symmetric space.
{\it Ann. of Math. (2)} {\bf 107} (1978), no. 1, 1--39.

\bibitem{Knapp-Wallach} Knapp, A. W.; Wallach, N. R. Szeg\"o kernels associated with discrete series. {\it Invent. Math.} {\bf 34} (1976),  163--200.


\bibitem{Koranyi-2011} Kor\'a{}nyi, A.  Poisson
transform for line bundles
from the Shilov
boundary to bounded symmetric
domains. Preprint, March 2011, to appear.



\bibitem{Koufany-Zhang} Koufany, K.; Zhang, G. Hua operators on bounded symmetric domains]{Hua operators and Poisson
transform for bounded symmetric domains}. {\it J. Funct. Anal.}  {\bf 236}  (2006),
546--580.

 
\bibitem{Lassalle} Lassalle, M. Les {\'e}quations de Hua d'un domaine
  born{\'e} sym{\'e}trique du type tube. {\it Invent. Math.}  {\bf 77}  (1984),
  no. 1, 129--161.

 
\bibitem{Loos} Loos, O. Bounded symmetric domains and Jordan
  pairs. {\it University of California, Irvine,} 1977.


\bibitem{Oshima1} Oshima, T. A definition of boundary values of solutions of partial
differential equations with regular singularities.
{\it Publ. Res. Inst. Math. Sci.} {\bf 19} (1983), no. 3, 1203--1230.

\bibitem{Oshima2} Oshima, T. Boundary value problems for systems
  of linear partial differential equations with regular singularities. {\it
    Adv. Stud. Pure Math.}, {\bf 4} (1984) 391--432.

\bibitem
{Oshima3} Oshima, T. A realization of semisimple symmetric
  spaces and construction of boundary value maps.  {\it Adv. Stud. Pure
    Math.}, {\bf 14} ( 1988) 603--650.

\bibitem{Shimeno-Oshima}  Oshima, T.;  Shimeno, N.
Boundary value problems on Riemannian Symmetric Spaces of the noncompact {\it Type. arXiv:1011.1314}

\bibitem{Satake}
Satake, I, Algebraic structures of symmetric domains,
 Iwanami Shoten and Princeton Univ. Press,  1980,
Tokyo and Princeton, NJ.



\bibitem{Schlich}
Schlichtkrull, H.  
One-dimensional $K$-types in finite dimensional representations of semisimple Lie groups : A generalization of Helgason's theorem,
{\it Math. Scand.}  {\bf 54}  (1984),  279--294

 
\bibitem{Shimeno1} Shimeno, N. Eigenspaces of invariant differential operators on a homogeneous line bundle on a Riemannian symmetric space.  
{\it J. Fac. Sci. Univ. Tokyo Sect. IA Math.}  {\bf 37}  (1990),   201--234.

\bibitem{Shimeno-jfa}
 Shimeno, N. The Plancherel formula for the spherical functions with one-dimensional $K$-type on a simply connected simple Lie group of hermitian type. {\it J. Funct. Anal.} {\bf 121} (1994), 331--388.


\bibitem{Shimeno2} Shimeno, N. Boundary value problems for the Shilov
  boundary of a bounded symmetric domain of tube type.  {\it J. Funct. Anal.}  {\bf 140}  (1996),  no. 1, 124--141. 

\bibitem{Shimeno3} 
Shimeno, N. {Boundary value problems for various boundaries of
  {H}ermitian symmetric spaces}, {\it J. Funct. Anal.} \textbf{170} (2000), no.2,
  265--285.

 
 

\bibitem{Shimura-1990}
Shimura, G. {Invariant differential operators on Hermitian symmetric
  spaces}, {\it Ann. Math.} \textbf{132} (1990), 232--272.

\bibitem{Upmeier-Toeplitz} Upmeier, H. Toeplitz operators on bounded symmetric domains. {\it Trans. Amer. Math. Soc.} {\bf 280} (1983), 221--237.

\bibitem{Yan} Yan, Z. A class of generalized hypergeometric functions in
  several variables.  {\it Canad. J. Math.}  {\bf 44}  (1992),  no. 6, 1317--1338.

\bibitem{gz-shimura}
Zhang, G. {Shimura invariant differential operators and their
  eigenvalues}, {\it Math. Ann.} \textbf{319} (2001), 235--265.

\bibitem{gz-invdiff}
Zhang, G. Invariant differential operators on hermitian symmetric spaces
  and their eigenvalues, {\it Israel J. Math.}
 \textbf{119} (2000), 157--185.

\bibitem{Zhang}
 Zhang, G. Nearly holomorphic functions and relative discrete series
of weighted $L^2-$spaces on bounded symmetric domains. {\it J. Math. Kyoto Univ.}
{\bf 42} (2002), 207-221.


 

\end{thebibliography}

\end{document}